\documentclass[11pt]{amsart}
\usepackage{lmodern}
\usepackage{amsmath, amsthm, amssymb, amsfonts}
\usepackage[normalem]{ulem}
\usepackage{hyperref}

\usepackage{mathrsfs}

\usepackage{verbatim} 
\usepackage{longtable}

\usepackage{mathtools}

\usepackage{tikz}
\usetikzlibrary{decorations.pathmorphing}
\tikzset{snake it/.style={decorate, decoration=snake}}

\usepackage{caption}

\usepackage{tikz-cd}
\usetikzlibrary{arrows}

\theoremstyle{plain}
\newtheorem{thm}{Theorem}[section]

\newtheorem{lem}[thm]{Lemma}
\newtheorem{prop}[thm]{Proposition}
\newtheorem{conj}[thm]{Conjecture}

\theoremstyle{definition}
\newtheorem{defn}[thm]{Definition}
\newtheorem{example}[thm]{Example}

\theoremstyle{remark}
\newtheorem{rmk}[thm]{Remark}

\newcommand{\BC}{{\mathbb{C}}}

\newcommand{\BP}{{\mathbb{P}}}
\newcommand{\BQ}{{\mathbb{Q}}}

\newcommand{\BZ}{{\mathbb{Z}}}

\newcommand{\CA}{{\mathcal A}}

\newcommand{\CE}{{\mathcal E}}
\newcommand{\CF}{{\mathcal F}}
\newcommand{\CG}{{\mathcal G}}
\newcommand{\CH}{{\mathcal H}}

\newcommand{\CK}{{\mathcal K}}

\newcommand{\CM}{{\mathcal M}}

\newcommand{\CO}{{\mathcal O}}
\newcommand{\CP}{{\mathcal P}}

\newcommand{\CX}{{\mathcal X}}

\newcommand{\Fp}{{\mathfrak{p}}}

\newcommand{\ch}{{\mathrm{ch}}}

\DeclareFontFamily{OT1}{rsfs}{}
\DeclareFontShape{OT1}{rsfs}{n}{it}{<-> rsfs10}{}
\DeclareMathAlphabet{\curly}{OT1}{rsfs}{n}{it}

\usepackage{tikz}
\usepackage{lmodern}
\usetikzlibrary{decorations.pathmorphing}

\begin{document}
\title[Dualizable abelian fibrations]{Dualizable abelian fibrations}
\date{\today}

\author[D. Maulik]{Davesh Maulik}
\address{Massachusetts Institute of Technology}
\email{maulik@mit.edu}

\author[J. Shen]{Junliang Shen}
\address{Yale University}
\email{junliang.shen@yale.edu}

\author[Q. Yin]{Qizheng Yin}
\address{Peking University}
\email{qizheng@math.pku.edu.cn}

\begin{abstract}

In his proof of the fundamental lemma of the Langlands program, Ngô initiated the study of the decomposition theorem for abelian fibrations. When an abelian fibration admits a duality structure, the decomposition theorem and the perverse filtration on cohomology exhibit rich structures. The purpose of these notes is to describe a framework for dualizable abelian fibrations and to discuss some recent progress and applications.
 
\end{abstract}

\maketitle

\setcounter{tocdepth}{1} 

\tableofcontents
\setcounter{section}{-1}

\section{Introduction}

Throughout, we work over the complex numbers $\BC$, and all Chow groups and (relative) Chow motives are taken with $\BQ$-coefficients.

An abelian fibration is a proper surjective morphism $f: M \to B$ between nonsingular varieties such that $f$ has equidimensional fibers and a general fiber is an abelian variety. Elliptic surfaces provide first examples of abelian fibrations, whose geometry and topology have been studied intensively since the foundational work of Kodaira and N\'eron in the 1960s.

In higher dimension, a complete classification of singular fibers analogous to that for elliptic surfaces is no longer possible. Instead, to help understand the topology of the fibration, a natural object to study is the (derived) pushforward
\begin{equation}\label{DT}
Rf_*\BQ_M \in D^b_c(B)
\end{equation}
lying in the bounded derived category of constructible sheaves. By the decomposition theorem of Beilinson, Bernstein, Deligne, and Gabber \cite{BBD}, the object (\ref{DT}) splits into shifted \emph{semisimple perverse sheaves}, \emph{i.e.}, direct sums of simple perverse sheaves which are the building blocks of the abelian category $\mathrm{Perv}(B) \subset D^b_c(B)$ of perverse sheaves. Every simple perverse sheaf is constructed from an irreducible subvariety $Z \subset B$ together with an irreducible local system on a dense open subset of $Z$. The subvariety $Z$ that appeared as above is called a \emph{support} of the morphism $f: M\to B$. In general, it is challenging to determine all the supports and the local systems associated with the decomposition theorem for (\ref{DT}).


Over 15 years ago, Ng\^o \cite{Ngo} observed that the decomposition theorem becomes more tractable for a large class of abelian fibrations, which he called \emph{$\delta$-regular weak abelian fibrations}. Ng\^o showed for such fibrations that remarkably limited geometric information suffices to determine \emph{all} the supports and local systems appearing in the decomposition theorem for~(\ref{DT}). Then Ng\^o applied this tool to the Hitchin system associated with a reductive group, from which he deduced the fundamental lemma of the Langlands program. A crucial geometric ingredient in Ng\^o's argument is the use of an action of a commutative $B$-group scheme $P$ on the total space $M$:
\[
\begin{tikzcd}
P \arrow[dr] 
\arrow[r, phantom, "\;\curvearrowright\;"] 
& M \arrow[d, "f"] \\
& B.
\end{tikzcd}
\]
The existence of such a group action is the main assumption for a $\delta$-regular weak abelian fibration, and it plays a key role in determining the supports and the local systems. We refer to \cite[Section 7]{Ngo}, \cite{Ngo_survey}, \cite[Section 1]{Shen_survey}, and \cite[Section 1]{MS_chi} for more detailed discussions. Besides the proof of the fundamental lemma, Ng\^o's techniques for the decomposition theorem of abelian fibrations have been applied to solving a number of open questions and conjectures, see \emph{e.g.}~\cite{dCRS, MS_HT, MS_chi, MS_PW, MT, MY}.

Recently, it has been observed that if the abelian fibration $f: M \to B$ admits a \emph{dual fibration} $f^\vee: M^\vee \to B$ satisfying certain nice properties, the object (\ref{DT}) exhibits richer structures. We highlight three directions in which the dualizable abelian fibration structure proves particularly powerful.

\begin{enumerate}
    \item[(a)] {\bf Locating tautological classes in perverse filtrations:} it was used in \cite{MSY} to give a proof of the $P=W$ conjecture \cite{dCHM} for the Hitchin system, in \cite{MSY, PSSZ} to give a proof of the $P=C$ conjecture \cite{KPS, KLMP} in the free range, and in \cite{Ghosh} to locate the Chern classes in the perverse filtration as conjectured in \cite{BMSY1}.
    \item[(b)] {\bf Constructing motivic decompositions:} it was used to verify the motivic decomposition conjecture of Corti--Hanamura for various abelian fibrations, including compactified Jacobian fibrations for integral locally planar curves \cite{MSY}, and Hitchin systems~\cite{MSY2}.
    \item[(c)] {\bf Analyzing multiplicative structures:} it was used in \cite{MSY} to prove that the perverse filtrations for many abelian fibrations are multiplicative with respect to the cup-product, generalizing a result of Oblomkov--Yun for certain compactified Jacobians~\cite{OY}; furthermore, the multiplicativity was used to construct the intrinsic cohomology ring of the universal Jacobian --- a ring structure on the universal fine compactified Jacobian which is independent of the choice of a natural compactification~\cite{BMSY2}.
\end{enumerate}

Roughly, an abelian fibration $f: M\to B$ is called \emph{dualizable}, if there is a dual fibration $f^\vee: M^\vee \to B$ whose general fiber is the dual abelian variety of the corresponding fiber of the original abelian fibration,
\[
\begin{tikzcd}
M \arrow[dr, "f"'] & & M^\vee \arrow[dl, "f^\vee"] \\
& B &,
\end{tikzcd}
\]
and there is an object $\CP$ on the relative product $M^\vee \times_B M$ which behaves like the Poincar\'e line bundle associated with an abelian scheme and its dual. The key observation in \cite{MSY} is that the correspondences induced by $\CP$, which are called the \emph{Fourier transforms}, characterize both the decomposition theorem for (\ref{DT}) and the associated perverse filtration. In particular, it suggests that even if we are only interested in the given abelian fibration $f: M \to B$, it is advantageous to consider its dual. 

We note that the structure of a dualizable abelian fibration is expected to enhance Ng\^o's group scheme action in the following sense: the fibration $f: M \to B$ is a partial compactification of (a torsor under) the group $P \to B$; a dual fibration $f^\vee: M^\vee \to B$ should then be a partial compactification of (a torsor under) the dual group $P^\vee \to B$. Furthermore, the property of $\delta$-regularity, which is a key ingredient in Ng\^o's theory, is a main geometric ingredient for the desired properties of the object $\CP$ for dualizable abelian fibrations \cite{MSY}; see also the proof of Theorem \ref{thm2.6} below. We refer to \cite{AF, Kim0, Kim} for relevant discussions regarding the group scheme $P$ and dual fibrations.

In Section \ref{sec1}, we review the work of Beauville \cite{B} and Deninger–Murre \cite{DM} for abelian schemes. This provides the main motivation for, and serves as a guiding example of, dualizable abelian fibrations. In Sections \ref{sec2} and \ref{sec3}, we discuss the definition, main results, and applications of dualizable abelian fibrations. The theory of dualizable abelian fibrations is presented largely following \cite{MSY} with minor modifications. Most results discussed in this paper are drawn from \cite{MSY, MSY2, Ghosh, BMSY1, BMSY2}.

\subsection{Acknowledgements}
This article closely follows the lectures given by the authors in the 2025 Summer Research Institute of Algebraic Geometry in Fort Collins.

D.M.~was supported by a Simons Investigator Grant.
J.S.~was supported by the NSF grant DMS-2301474 and a Sloan Research Fellowship.

\section{Abelian schemes}\label{sec1}

\subsection{Overview}

Let $f: A \to B$ be an abelian scheme of relative dimension $g$ over a nonsingular base~$B$. It admits a natural dual 
\begin{equation}\label{dual}
\begin{tikzcd}
A \arrow[dr, "f"'] & & A^\vee \arrow[dl, "f^\vee"] \\
& B
\end{tikzcd}
\end{equation}
with $A^\vee$ given by the identity component of the relative Picard $A^\vee:= \mathrm{Pic}^0(A)$. The dual $A^\vee$ is a moduli space relative over $B$ with a universal family given by the normalized Poincar\'e line bundle $\CP \in \mathrm{Pic}(A^\vee \times_B A)$. The main purpose of this section is to explain that the dual~(\ref{dual}) and the normalized Poincar\'e line bundle $\CP$ yield the following structural results:
\begin{enumerate}
    \item[(a)] The Fourier transform induced by $\CP$ yields a motivic decomposition for $Rf_*\BQ_A$; this decomposition is nowadays known as the Beauville decomposition.
    \item[(b)] The algebra objects $(Rf_*\BQ_A[-], \cup)$ and $(Rf^\vee_* \BQ_{A^\vee}[-], \ast)$ induced by the cup-product on~$A$ and the convolution product on $A^\vee$ are isomorphic via the Fourier transform of~(a).
    \item[(c)] The natural tautological classes are homogeneous with respect to the Beauville decomposition; this is a toy model of the $P=C$ phenomenon.
\end{enumerate}
All the results in this section are due to Beauville \cite{B} and Deninger--Murre \cite{DM}.

\subsection{Fourier transforms}\label{sec1.2}
The normalized Poincar\'e line bundle $\CP \in \mathrm{Pic}(A^\vee \times_B A)$ induces a Fourier--Mukai transform
\[
\mathrm{FM}_\CP: D^b\mathrm{Coh}(A^\vee) \xrightarrow{\simeq} D^b\mathrm{Coh}(A)
\]
whose inverse is given by the Fourier--Mukai kernel
\[
\CP^{-1}:= \CP^\vee \otimes p_A^*\omega_{A/B}[g] \in D^b\mathrm{Coh}(A \times_B A).
\]
Here $p_A: A\times_B A^\vee \to A^\vee$ is the natural projection and $\omega_{A/B}$ is the relative canonical bundle. 

We set
\begin{equation*} 
\mathfrak{F} = \sum_i \mathfrak{F}_i := \ch(\CP) \in \mathrm{CH}^*(A^\vee \times_B A), \quad \mathfrak{F}_i := \ch_i(\CP) \in \mathrm{CH}^i(A^\vee \times_B A).
\end{equation*}
We view $\mathfrak{F}$ as a mixed-degree Chow correspondence between $A^\vee$ and $A$, whose inverse is defined via Riemann--Roch:
\[
\mathfrak{F}^{-1} = \sum_i \mathfrak{F}^{-1}_i\in \mathrm{CH}^*(A \times_B A^\vee), \quad \mathfrak{F}^{-1}_i \in \mathrm{CH}^i(A \times_B A^\vee).
\]
The correspondences $\mathfrak{F}, \mathfrak{F}^{-1}$ are known as the \emph{Fourier transforms}.

We have a tautological expression of the relative diagonal classes
\begin{equation}\label{diag}
[\Delta_{A/B}] = \mathfrak{F} \circ \mathfrak{F}^{-1} \in \mathrm{CH}^g(A \times_B A), \quad [\Delta_{A^\vee/B}] = \mathfrak{F}^{-1} \circ \mathfrak{F} \in \mathrm{CH}^g(A^\vee\times_B A^\vee)
\end{equation}
where $\circ$ stands for the composition of correspondences.

The following \emph{Fourier vanishing} result plays a key role in the theory of Fourier transforms for abelian schemes.

\begin{prop}[Fourier vanishing] \label{prop1.1}
For $i+j \neq 2g$, we have
\[
\mathfrak{F}_i\circ \mathfrak{F}^{-1}_{j}=0 \in \mathrm{CH}^{i+j-g}(A \times_B A), \quad \mathfrak{F}^{-1}_i \circ \mathfrak{F}_j = 0 \in \mathrm{CH}^{i+j-g}(A^\vee\times_B A^\vee).
\]
\end{prop}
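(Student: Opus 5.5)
Our approach is to use the multiplication-by-$N$ maps $[N]: A \to A$ and $[N]: A^\vee \to A^\vee$ and their interaction with the Poincaré bundle. The key structural input is the standard identity
\[
([N]\times \mathrm{id})^*\CP \cong \CP^{\otimes N} \cong (\mathrm{id}\times [N])^*\CP,
\]
which holds because $\CP$ is normalized along the zero sections. The theorem of the cube (or the fact that $\CP$ is biextension-like) gives this identity. Taking Chern characters, the degree-$i$ piece $\mathfrak{F}_i = \ch_i(\CP) = c_1(\CP)^i/i!$ then satisfies
\[
([N]\times\mathrm{id})^*\mathfrak{F}_i = N^i\mathfrak{F}_i, \qquad (\mathrm{id}\times [N])^*\mathfrak{F}_i = N^i \mathfrak{F}_i.
\]
Since $[N]$ is finite flat of degree $N^{2g}$, we have $[N]_*[N]^* = N^{2g}$. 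Combined with the above, this gives $([N]\times \mathrm{id})_*\mathfrak{F}_i = N^{2g-i}\mathfrak{F}_i$.

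Next we treat the inverse $\mathfrak{F}^{-1}$. By Grothendieck--Riemann--Roch, $\mathfrak{F}^{-1}$ equals, up to a sign and the Todd class (trivial here, since the relative tangent bundle of $A\to B$ is pulled back from $B$), the class $\ch(\CP^\vee)$ transposed. Concretely, one shows $\mathfrak{F}^{-1} = (-1)^g\,{}^t\ch(\CP^{-1})$, up to a Todd factor pulled back from $B$ which commutes with everything. Hence $\mathfrak{F}^{-1}_j$ is, up to sign, ${}^t c_1(\CP)^{j}/j!$ times such factors. It therefore satisfies the same homogeneity: its pullback under $[N]$ on either factor scales by $N^j$, and its pushforward scales by $N^{2g-j}$. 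Making this GRR identification precise, and ensuring that the base Todd factor causes no trouble, is the step I expect to require care. An alternative route avoids it: apply $[N]$-homogeneity directly to $\mathfrak{F}\circ\mathfrak{F}^{-1}=[\Delta]$ after splitting $\mathfrak{F}^{-1}$ into $[N]^*$-eigencomponents.

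Finally we compute the composition $\mathfrak{F}_i\circ\mathfrak{F}^{-1}_j$, which is a correspondence $A\to A^\vee\to A$. Inserting $[N]$ on the middle factor $A^\vee$ computes it in two ways. Moving $[N]^*$ onto $\mathfrak{F}_i$ gives a factor $N^i$, while moving $[N]_*$ onto $\mathfrak{F}^{-1}_j$ gives $N^{2g-j}$. Thus
\[
N^{i}\,\mathfrak{F}_i\circ\mathfrak{F}^{-1}_j \;=\; \mathfrak{F}_i\circ [N]^*\circ... \;=\; N^{2g-j}\,\mathfrak{F}_i\circ\mathfrak{F}^{-1}_j.
\]
This uses the projection formula for correspondences, $\alpha\circ{}^t\Gamma_{[N]}\circ\Gamma_{[N]}\circ\beta$, together with $\Gamma_{[N]}\circ{}^t\Gamma_{[N]} = N^{2g}\Delta$, or equivalently $[N]_*[N]^*=N^{2g}$. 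More cleanly, the key identity is $\mathfrak{F}_i\circ\mathfrak{F}^{-1}_j = \mathfrak{F}_i\circ [N]^*_{A^\vee}[N]_{A^\vee *}\circ \mathfrak{F}^{-1}_j \cdot N^{-2g}$. Evaluating the two sides gives the equation $N^{i+j-2g}\,x = x$ for $x = \mathfrak{F}_i\circ\mathfrak{F}^{-1}_j$ with rational coefficients. Choosing $N=2$ and using $i+j\neq 2g$ forces $x=0$. The second vanishing statement follows by the symmetric argument with the roles of $A$ and $A^\vee$ exchanged, using biduality $A^{\vee\vee}=A$.

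The main obstacle is tracking the exact eigenvalue bookkeeping in this composition step. We must confirm that inserting $[N]$ in the middle factor really yields the weights $N^i$ and $N^{2g-j}$, with the degree of $[N]$ entering exactly once.
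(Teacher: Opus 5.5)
\textbf{The gap is the homogeneity of $\mathfrak{F}^{-1}_j$.} Your final bookkeeping is fine. You have $\mathfrak{F}_i\circ\Gamma_{[N]}=([N]\times\mathrm{id})^*\mathfrak{F}_i=N^i\mathfrak{F}_i$ and $\Gamma_{[N]}\circ\mathfrak{F}^{-1}_j=(\mathrm{id}\times[N])_*\mathfrak{F}^{-1}_j$, so associativity gives $N^ix=N^{2g-j}x$ \emph{provided} $(\mathrm{id}\times[N])^*\mathfrak{F}^{-1}_j=N^j\mathfrak{F}^{-1}_j$. (Your ``cleaner'' formula has the order reversed: $[N]^*[N]_*$ is the correspondence $\{(x,y):Nx=Ny\}$, not $N^{2g}$. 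Plain associativity makes it unnecessary anyway.)

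The homogeneity of $\mathfrak{F}^{-1}_j$ is exactly what is not justified. Grothendieck--Riemann--Roch applied to $\CP\circ\CP^{-1}\simeq\CO_{\Delta_{A/B}}$ only gives $\mathfrak{F}^{-1}=E\cdot\pi^*\beta$. Here $E=\sum_aE_a=\pm\,{}^t\ch(\CP^\vee)$ genuinely satisfies $(\mathrm{id}\times[N])^*E_a=N^aE_a$, $\pi$ denotes the structure maps to $B$, and $\beta=1+\beta_1+\beta_2+\cdots\in\mathrm{CH}^*(B)$ is a Todd-type class built from $\mathrm{Lie}(A)$, $\mathrm{Lie}(A^\vee)$ and $\omega_{A/B}$. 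That $\pi^*\beta$ commutes with everything is beside the point: it has weight $0$ but positive codimension. So $\mathfrak{F}^{-1}_j=\sum_bE_{j-b}\cdot\pi^*\beta_b$ mixes the weights $j,j-1,j-2,\dots$. Run honestly, your argument yields only $\mathfrak{F}_i\circ\mathfrak{F}^{-1}_j=\pi^*\beta_{i+j-2g}\cdot(\mathfrak{F}_i\circ E_{2g-i})$. This vanishes for $i+j<2g$ but not visibly for $i+j>2g$. The statement $\beta=1$ is exactly the Deninger--Murre inversion formula. For principally polarized $A$ it is equivalent to the Mumford relation $c(\mathbb{E})\,c(\mathbb{E}^\vee)=1$ for the Hodge bundle in $\mathrm{CH}^*(B)$, which is a genuine theorem, not bookkeeping.

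The paper never needs any explicit knowledge of $\mathfrak{F}^{-1}$. It takes the components of $\mathfrak{F}^{-1}\circ\mathfrak{F}=[\Delta_{A^\vee/B}]$ with $i+j=k\neq 2g$, namely $\sum_{i+j=k}\mathfrak{F}^{-1}_i\circ\mathfrak{F}_j=0$, and applies $([N]\times\mathrm{id})^*$. Since $([N]\times\mathrm{id})^*\gamma=\gamma\circ\Gamma_{[N]}$, the pullback hits only $\mathfrak{F}_j$, giving $\sum_jN^j\,\mathfrak{F}^{-1}_{k-j}\circ\mathfrak{F}_j=0$ for all $N$. A Vandermonde argument then kills each term. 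Only the exact homogeneity of $\mathfrak{F}=\ch(\CP)$ enters. Your ``alternative route'' points this way, but splitting $\mathfrak{F}^{-1}$ into eigencomponents is unnecessary and is exactly where the difficulty sits.

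If you want to keep your two-sided weight argument, apply it to the honestly homogeneous $E$ instead. It shows that $\mathfrak{F}\circ E=\sum_i\mathfrak{F}_i\circ E_{2g-i}$ is pure of codimension $g$. Now compare codimensions in $\mathfrak{F}\circ E=\pi^*\beta^{-1}\cdot[\Delta_{A/B}]$, which follows from $\mathfrak{F}\circ\mathfrak{F}^{-1}=[\Delta_{A/B}]$. Pushing forward to $A$ and restricting to the zero section forces $\beta=1$, after which your argument is complete.
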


\begin{proof}
Considering the degree $k\neq 2g$ part of the second identity of (\ref{diag}) gives
\begin{equation}\label{prop1.1_1}
0 = \sum_{i+j=k} \mathfrak{F}^{-1}_i\circ \mathfrak{F}_{j}.
\end{equation}
Now for an integer $N$, we consider the ``multiplication by $N$'' map $[N]: A^\vee \to A^\vee$. The normalized Poincar\'e line bundle interacts with this map as
\[
\left([N] \times \mathrm{id}\right)^* \CP \simeq \CP^{\otimes N} \in \mathrm{Pic}\left( A^\vee \times A \right).
\]
In particular, the map $[N]\times \mathrm{id}: A^\vee\times A^\vee \to A^\vee\times A^\vee$ satisfies
\[
([N] \times \mathrm{id})^* \left( \mathfrak{F}^{-1}_i\circ \mathfrak{F}_j\right) = N^j\mathfrak{F}^{-1}_i\circ \mathfrak{F}_j
\]
Consequently, we apply the pullback along $([N] \times \mathrm{id})^*$ to (\ref{prop1.1_1}) for all $N$, yielding the second identity of Proposition \ref{prop1.1}. The first identity follows by a parallel argument.
\end{proof}

\subsection{Applications}
We discuss some applications of the duality (\ref{dual}) and the Fourier transforms $\mathfrak{F}, \mathfrak{F}^{-1}$. 

The decomposition theorem for the smooth morphism $f: A \to B$ reads
\begin{equation}\label{thm1.3_1}
Rf_* \BQ_A \simeq \bigoplus_{i=0}^{2g} R^if_* \BQ_A [-i]
\end{equation}
where each summand on the right-hand side is a (semisimple) local system.

The first application of the Fourier transform package is the construction of the Beauville decomposition --- a canonical motivic lifting of (\ref{thm1.3_1}) where the projectors are explicitly given by $\mathfrak{F}, \mathfrak{F}^{-1}$.

\begin{thm}[Motivic decomposition]\label{thm1.2}
    There is a decomposition of the relative Chow motive
    \[
    h(A) = \bigoplus_{i=0}^{2g} h_i(A)
    \]
    compatible with the cup-product. The projector $\mathfrak{q}_i$ for each summand $h_i(A)=(A, \mathfrak{q}_i, 0)$ is given~by 
    \[
    \mathfrak{q}_i:= \mathfrak{F}_i \circ \mathfrak{F}^{-1}_{2g-i} \in \mathrm{CH}^g(A \times_B A).
    \]
   Moreover, the sheaf-theoretic specialization yields a canonical multiplicative decomposition 
    \begin{equation}\label{can_decomp}
    Rf_*\BQ_A = \bigoplus_{i=0}^{2g} R^if_*\BQ_A[-i],
    \end{equation}
    \emph{i.e.}, the isomorphism \eqref{can_decomp} is compatible with the cup-product.
\end{thm}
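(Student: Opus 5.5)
The plan is to derive everything from the diagonal identity (\ref{diag}) together with Proposition \ref{prop1.1}.

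\emph{Projectors.} Taking the codimension-$g$ part of the first identity in (\ref{diag}) gives
\[
[\Delta_{A/B}]=\sum_{i=0}^{2g}\mathfrak{F}_i\circ\mathfrak{F}^{-1}_{2g-i}=\sum_i \mathfrak{q}_i .
\]
The components with $i+j\neq 2g$ do not contribute, by the first identity of Proposition \ref{prop1.1}. For orthogonality and idempotence, I compute
\[
\mathfrak{q}_i\circ\mathfrak{q}_j=\mathfrak{F}_i\circ(\mathfrak{F}^{-1}_{2g-i}\circ\mathfrak{F}_j)\circ\mathfrak{F}^{-1}_{2g-j}.
\]
The middle factor vanishes unless $2g-i+j=2g$, that is, unless $i=j$; this is the second identity of Proposition \ref{prop1.1}. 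When $i=j$, the middle factor is the codimension-$g$ part of $\mathfrak{F}^{-1}\circ\mathfrak{F}=[\Delta_{A^\vee/B}]$. Again by Proposition \ref{prop1.1}, that part equals $\sum_k\mathfrak{F}^{-1}_{2g-k}\circ\mathfrak{F}_k$, and hence equals $[\Delta_{A^\vee/B}]$ itself. Composing with the diagonal, we get $\mathfrak{q}_i\circ\mathfrak{q}_i=\mathfrak{q}_i$. This yields the decomposition $h(A)=\bigoplus_i h_i(A)$.

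\emph{Identification with the Deninger--Murre grading.} Next I check that $[N]$ acts on $\mathfrak{q}_i$ by the scalar $N^i$. For the map $[N]:A\to A$ we have $(\mathrm{id}\times[N])^*\CP\simeq\CP^{\otimes N}$, so $(\mathrm{id}\times[N])^*\mathfrak{F}_i=N^i\mathfrak{F}_i$. It follows that
\[
[\Gamma_{[N]}]^{t}\circ\mathfrak{q}_i=\mathfrak{q}_i\circ[\Gamma_{[N]}]^{t}=N^i\mathfrak{q}_i,
\]
and dually pushforward acts by $N^{2g-i}$. Realizing this sheaf-theoretically, $[N]^*$ acts on $R^kf_*\BQ_A$ by $N^k$. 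The projector $\mathfrak{q}_i$ therefore cuts out the summand on which $[N]^*$ acts by $N^i$, which is $R^if_*\BQ_A[-i]$. This gives (\ref{can_decomp}) canonically, as the eigen-decomposition for $[N]^*$.

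\emph{Multiplicativity.} Write the cup product as the small diagonal $\Delta_3\in\mathrm{CH}^{2g}(A\times_BA\times_BA)$, viewed as a correspondence from $A\times_B A$ to $A$. I need
\[
\mathfrak{q}_k\circ\Delta_3\circ(\mathfrak{q}_i\otimes\mathfrak{q}_j)=0 \quad\text{for } k\neq i+j.
\]
To see this, use $[N]^*\circ\Delta_3=\Delta_3\circ([N]^*\otimes[N]^*)$, which holds because $[N]$ commutes with the diagonal. Then $N^k$ times the left-hand side equals $N^{i+j}$ times the left-hand side for all $N$, so the left-hand side vanishes when $k\neq i+j$.

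The main obstacle is making these ``eigenvalue'' manipulations rigorous at the level of correspondences. One has to check that $\Gamma_{[N]}$ composed with $\mathfrak{F}_i$ really gives the pullback formula. Since $A^\vee\times_BA\to A$ is smooth, base change for correspondences should handle this. The scalar comparison is then formal. The sheaf-level statement is the specialization of the Chow-level statement under the realization functor, and the realization is compatible with composition and with the cup product.
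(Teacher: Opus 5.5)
Your proposal takes the same route as the paper. You get orthogonal projectors from the codimension-$g$ part of $[\Delta_{A/B}]=\mathfrak{F}\circ\mathfrak{F}^{-1}$ together with Fourier vanishing, and then use the eigenvalues of $[N]^*$ both to identify the realization with \eqref{can_decomp} and to prove multiplicativity. Two steps need a one-line repair.

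\textbf{Idempotence.} The middle factor $\mathfrak{F}^{-1}_{2g-i}\circ\mathfrak{F}_i$ is \emph{not} $[\Delta_{A^\vee/B}]$. It is only one summand of $[\Delta_{A^\vee/B}]=\sum_k\mathfrak{F}^{-1}_{2g-k}\circ\mathfrak{F}_k$, and is itself one of the projectors for $A^\vee$. What is true is that the summands with $k\neq i$ are killed once sandwiched between $\mathfrak{F}_i$ and $\mathfrak{F}^{-1}_{2g-i}$, by the first identity of Proposition \ref{prop1.1}, so you may insert them. More simply, orthogonality gives $\mathfrak{q}_i=\mathfrak{q}_i\circ[\Delta_{A/B}]=\sum_j\mathfrak{q}_i\circ\mathfrak{q}_j=\mathfrak{q}_i\circ\mathfrak{q}_i$.

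\textbf{Right-composition with $[N]^*$.} The relation $(\mathrm{id}\times[N])^*\mathfrak{F}_i=N^i\mathfrak{F}_i$ only gives $[\Gamma_{[N]}]^t\circ\mathfrak{q}_i=N^i\mathfrak{q}_i$. The right-hand relation $\mathfrak{q}_k\circ[\Gamma_{[N]}]^t=N^k\mathfrak{q}_k$, which is exactly where your factor $N^k$ in the multiplicativity step comes from, would need information about how $\mathfrak{F}^{-1}$ behaves under $[N]$, and you have not supplied it. Obtain it instead from $[\Gamma_{[N]}]^t=[\Gamma_{[N]}]^t\circ\sum_j\mathfrak{q}_j=\sum_jN^j\mathfrak{q}_j$ together with orthogonality.

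With these two fixes your argument is complete, at the level of detail of the paper's sketch.
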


The motivic decomposition follows directly from the first identity of (\ref{diag}), where the Fourier vanishing of Proposition \ref{prop1.1} is used to prove that the correspondences $\mathfrak{q}_i$ indeed form orthogonal projectors. The multiplicativity is deduced by showing the compatibility of the motivic decomposition and the ``multiplication by $N$'' map, which was already implicitly used in the proof of Proposition \ref{prop1.1}. In particular, consider the ``multiplication by $N$'' correspondence
\[
[N]^*: Rf_* \BQ_A \to Rf_*\BQ_A;
\]
the decomposition (\ref{can_decomp}) is recovered by the decomposition into eigenspaces of this correspondence with each local system $R^if_*\BQ_A$ the eigenspace corresponding to $N^i$.

Similarly, we consider the decomposition into eigenspaces for the dual fibration
\begin{equation}\label{can_decomp2}
    Rf^\vee_*\BQ_{A^\vee} = \bigoplus_{i=0}^{2g} R^if^\vee_*\BQ_{A^\vee}[-i].
\end{equation}
The decompositions (\ref{can_decomp}) and (\ref{can_decomp2}) further yield multiplicative decompositions of the cohomology
\[
H^*(A,\BQ) = \bigoplus_{i=0}^{2g} H^*_{(i)}(A, \BQ), \quad H^*(A^\vee,\BQ) = \bigoplus_{i=0}^{2g} H^*_{(i)}(A^\vee, \BQ)
\]
which split the Leray filtrations for $f: A \to B$ and $f^\vee: A^\vee\to B$ respectively.

\begin{thm}[Fourier stability]\label{thm1.3_2}
    The Fourier transforms
\begin{equation}\label{Fourier_stab}
\begin{tikzcd}
Rf_*\BQ_A[-] \arrow[r, bend left=30, "{\mathfrak{F}^{-1}}"] 
  & Rf^\vee_* \BQ_{A^\vee}[-] \arrow[l, bend left=30, "{\mathfrak{F}}"]
\end{tikzcd}
\end{equation}
respect the decompositions \eqref{can_decomp} and \eqref{can_decomp2}. Moreover, the only nontrivial correspondences restricting to the direct sum components are
\[
\begin{tikzcd}
R^if_*\BQ_A[-i] \arrow[r, bend left=30, "{\mathfrak{F}_{2g-i}^{-1}}"] 
  & R^{2g-i}f^\vee_* \BQ_{A^\vee}[-i] \arrow[l, bend left=30, "{\mathfrak{F}_i}"].
\end{tikzcd}
\]
In particular, the $i$-th piece $H^*_{(i)}(A, \BQ)$ is characterized by the Fourier transform:
\begin{equation}\label{P=C_0}
H_{(i)}^*(A, \BQ) = \mathrm{Im}\left( \mathfrak{F}_{i}: H^*(A^\vee, \BQ) \to H^*(A, \BQ) \right).
\end{equation}
\end{thm}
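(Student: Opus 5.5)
The strategy is to express everything through the eigenspace characterization of the decompositions under the ``multiplication by $N$'' maps, and then use the homogeneity of the Fourier components. The scaling identity from the proof of Proposition \ref{prop1.1} is
\[
([N]\times\mathrm{id})^*\CP \simeq \CP^{\otimes N}.
\]
Together with $(\mathrm{id}\times[N])^*\CP\simeq\CP^{\otimes N}$, which holds by the symmetry of the normalized Poincar\'e bundle, it gives
\[
([N]\times \mathrm{id})^*\mathfrak{F}_i = N^i\mathfrak{F}_i, \qquad (\mathrm{id}\times [N])^*\mathfrak{F}_i = N^i \mathfrak{F}_i.
\]
The same homogeneity holds for $\mathfrak{F}^{-1}_i$. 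This follows from the explicit form of $\CP^{-1}$: it is $\CP^\vee$ twisted by a pullback of $\omega_{A/B}$, which is pulled back from $B$ along the zero section, and $\CP^\vee$ scales by $N$ as well. Alternatively, one can apply Grothendieck--Riemann--Roch and observe that the Todd class twist is pulled back from $B$.

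Next I translate these pullback identities into composition identities with the correspondences $[N]^*$ and $[N]_*$. We have $([N]\times\mathrm{id})^*\Gamma = \Gamma\circ [N]_*$-type relations. Using the fact that $[N]$ is finite flat of degree $N^{2g}$, so that $[N]_*[N]^* = N^{2g}$, this yields
\[
[N]^*_{A}\circ \mathfrak{F}_i = N^{i}\,\mathfrak{F}_i, \qquad \mathfrak{F}_i\circ [N]^*_{A^\vee} = N^{2g-i}\,\mathfrak{F}_i
\]
as maps of complexes. The analogous identities hold for $\mathfrak{F}^{-1}_{2g-i}$, with the roles of $A$ and $A^\vee$ swapped. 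Getting the exponents exactly right, namely that $i$ on one side becomes $2g-i$ on the other, is the main bookkeeping obstacle. I would handle it by computing on a single fibre, where $\mathfrak{F}_i$ restricted to $H^{2g-i}(A^\vee_b)$ lands in $H^i(A_b)$ by degree reasons and Künneth.

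With these identities in hand, the eigenspace description from Theorem \ref{thm1.2} shows the following. The map $\mathfrak{F}_i$ annihilates every eigenspace of $[N]^*_{A^\vee}$ except the one with eigenvalue $N^{2g-i}$, namely $R^{2g-i}f^\vee_*\BQ_{A^\vee}[-(2g-i)]$. It lands in the eigenspace $R^if_*\BQ_A[-i]$. The shift $[-i]$ in the statement arises from $\mathfrak{F}_i$ having cohomological degree $2i-2g$ as a correspondence relative to $B$.

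Similarly, $\mathfrak{F}^{-1}_{j}$ is nonzero only from $R^{2g-j}f_*$ to $R^{j}f^\vee_*$. Hence $\mathfrak{F}=\sum\mathfrak{F}_i$ and $\mathfrak{F}^{-1}$ respect the decompositions, and these are the only nonzero blocks.

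For \eqref{P=C_0}, I would use \eqref{diag} together with Proposition \ref{prop1.1}. On $R^if_*\BQ_A[-i]$, the identity equals $\mathfrak{q}_i=\mathfrak{F}_i\circ\mathfrak{F}^{-1}_{2g-i}$. Therefore $H^*_{(i)}(A,\BQ)\subset \mathrm{Im}(\mathfrak{F}_i)$, and the reverse inclusion follows from the block structure just established. Passing to global cohomology is formal, since the sheaf decompositions induce the cohomology decompositions.
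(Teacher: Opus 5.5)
Your treatment of $\mathfrak{F}$ is correct, and it is a genuine alternative to the paper's route, which gets everything formally from Proposition~\ref{prop1.1} and \eqref{diag} (e.g.\ $\mathfrak{F}_i=\mathfrak{q}_i\circ\mathfrak{F}_i$). The identities $[N]^*_A\circ\mathfrak{F}_i=N^i\mathfrak{F}_i$ and $\mathfrak{F}_i\circ[N]^*_{A^\vee}=N^{2g-i}\mathfrak{F}_i$ (the latter via $[N]_*[N]^*=N^{2g}$) hold in $D^b_c(B)$ and kill every off-diagonal block of $\mathfrak{F}_i$. Your deduction of \eqref{P=C_0} is also fine.

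The gap is the claim that $\mathfrak{F}^{-1}_i$ is homogeneous ``because the twist is pulled back from $B$''. That inference is false. Write $\ell=c_1(\CP)$ and $\lambda$ for the pullback of $c_1(\omega_{A/B})$. Up to transposition, $\mathrm{ch}(\CP^{-1})=(-1)^g e^{-\ell}e^{\lambda}$, so
\[
\mathrm{ch}_i(\CP^{-1})=(-1)^g\sum_{a+b=i}\frac{(-\ell)^a}{a!}\,\frac{\lambda^b}{b!},
\]
and $[N]$ (on either factor) rescales the $(a,b)$-term by $N^a$, not $N^i$. Any Todd factor pulled back from $B$ does the same. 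These terms are not harmless. Cup product with a class in $H^{2b}(B,\BQ)$, $b>0$, is a morphism $R^kf^\vee_*\BQ_{A^\vee}[-k]\to R^kf^\vee_*\BQ_{A^\vee}[-k][2b]$ in $D^b_c(B)$, generally nonzero though invisible on stalks. So an $(a,b)$-term with $b>0$ would give $\mathfrak{F}^{-1}_i$ a block $R^{2g-a}f_*\to R^af^\vee_*$ with $a\neq i$, which is exactly what the theorem excludes. For the same reason, a single-fibre K\"unneth check can fix exponents but proves no vanishing. Homogeneity of $\mathfrak{F}^{-1}$ does hold, but only because the total correction (the $\omega$-twist together with the Grothendieck--Riemann--Roch Todd factor) equals $1$ in $\mathrm{CH}^*(B)$. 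Then $\mathfrak{F}^{-1}$ is $(-1)^g$ times the transpose of $e^{-\ell}$, as in Deninger--Murre. That is a genuine relation among Chern classes of the Hodge bundle and needs its own proof.

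The repair, which is the paper's argument, is to avoid homogeneity of $\mathfrak{F}^{-1}$ entirely. The proof of Proposition~\ref{prop1.1} uses only the homogeneity of $\mathfrak{F}$, and combined with \eqref{diag} it gives
\[
\mathfrak{F}^{-1}_j=\mathfrak{F}^{-1}_j\circ[\Delta_{A/B}]=\sum_i\mathfrak{F}^{-1}_j\circ\mathfrak{F}_i\circ\mathfrak{F}^{-1}_{2g-i}=\mathfrak{F}^{-1}_j\circ\mathfrak{q}_{2g-j}.
\]
Likewise $\mathfrak{F}^{-1}_j=[\Delta_{A^\vee/B}]\circ\mathfrak{F}^{-1}_j=\mathfrak{q}^\vee_j\circ\mathfrak{F}^{-1}_j$, where $\mathfrak{q}^\vee_j:=\mathfrak{F}^{-1}_j\circ\mathfrak{F}_{2g-j}$. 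Your identity for $\mathfrak{F}_{2g-j}$ gives $\mathfrak{q}^\vee_j\circ[N]^*_{A^\vee}=N^j\mathfrak{q}^\vee_j$. Together with $\sum_j\mathfrak{q}^\vee_j=\mathrm{id}$, this identifies $\mathfrak{q}^\vee_j$ with the projection onto $R^jf^\vee_*\BQ_{A^\vee}[-j]$. Hence $\mathfrak{F}^{-1}_j$ lives only on the block $R^{2g-j}f_*\to R^jf^\vee_*$. Alternatively: you have shown $\mathfrak{F}$ is block-diagonal, and it is invertible by \eqref{diag}, so $\mathfrak{F}^{-1}$ is block-diagonal too; sorting by cohomological shift gives the same conclusion.
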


In fact, Fourier vanishing implies that the motivic decompositions for $f: A\to B$ and~$f^\vee: A^\vee \to B$ respectively as given in Theorem \ref{thm1.2} are compatible with the Fourier transforms~$\mathfrak{F}, \mathfrak{F}^{-1}$. Sheaf-theoretic specializations yield Theorem \ref{thm1.3_2}.

Finally, we recall that the multiplication map 
\begin{equation}\label{mult_0}
\mathrm{mult}: A^\vee \times_B A^\vee \to A^\vee
\end{equation}
with respect to the group structure. This yields a \emph{convolution product}
\[
\left( Rf^\vee_* \BQ_{A^\vee}[-], \ast \right),
\]
whose restrictions to the summands are 
\[
* = \mathrm{mult}_*: R^{2g-i}f^\vee_*\BQ_{A^\vee}[-i] \otimes R^{2g-j}f^\vee_* \BQ_{A^\vee}[-j] \to  R^{2g-i-j}f^\vee_*\BQ_{A^\vee}[-i-j].
\]
The next theorem further enhances the Fourier stability of Theorem \ref{thm1.3_2}.

\begin{thm}[Convolution]\label{thm1.3_3}
The Fourier transforms induce an isomorphism of algebra objects
    \[
\begin{tikzcd}
\left(Rf_*\BQ_A[-], \cup\right) \arrow[r, bend left=30, "{\mathfrak{F}^{-1}}"] 
  & \left(Rf^\vee_* \BQ_{A^\vee}[-], \ast\right) \arrow[l, bend left=30, "{\mathfrak{F}}"]
\end{tikzcd}
\]
which enhances \eqref{Fourier_stab}.
\end{thm}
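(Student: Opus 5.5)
Since $\mathfrak{F}$ and $\mathfrak{F}^{-1}$ are mutually inverse isomorphisms by (\ref{diag}), it suffices to prove one direction: that $\mathfrak{F}$ intertwines convolution on $A^\vee$ with cup product on $A$. In terms of correspondences over $B$, this is the identity
\[
\mathfrak{F}\circ \mathrm{mult}_* \;=\; \Delta^*\circ(\mathfrak{F}\times_B\mathfrak{F}) \colon\; h(A^\vee)\otimes h(A^\vee)\to h(A).
\]
Here $\Delta: A\to A\times_B A$ is the relative diagonal, so that $\Delta^*$ is the cup product. I will check this identity at the level of Chow (or even $K$-theoretic) correspondences; the sheaf-theoretic statement then follows by specialization.

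The key input is the theorem of the square, in its form for the Poincaré bundle. Let $m=\mathrm{mult}\times \mathrm{id}_A: A^\vee\times_B A^\vee\times_B A\to A^\vee\times_B A$. Since $A^\vee$ parametrizes $\mathrm{Pic}^0$ and $\CP$ is the normalized universal family, we have
\[
m^*\CP\simeq p_{13}^*\CP\otimes p_{23}^*\CP .
\]
Taking Chern characters gives
\[
m^*\mathfrak{F}=p_{13}^*\mathfrak{F}\cdot p_{23}^*\mathfrak{F}
\]
in $\mathrm{CH}^*(A^\vee\times_B A^\vee\times_B A)$.

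Now compute both sides of the identity as correspondences from $A^\vee\times_B A^\vee$ to $A$.

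\textbf{Left-hand side.} The composition $\mathfrak{F}\circ \mathrm{mult}_*$ is the correspondence given by the graph of $\mathrm{mult}$ composed with $\mathfrak{F}$. By the projection formula, together with the fact that $\mathrm{mult}\times\mathrm{id}$ is flat (so base change holds), this correspondence equals $m^*\mathfrak{F}$.

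\textbf{Right-hand side.} The correspondence $\Delta^*\circ(\mathfrak{F}\times_B\mathfrak{F})$ is $\mathfrak{F}\times_B\mathfrak{F}$ on $A^\vee\times_BA^\vee\times_B A\times_B A$ restricted to the diagonal in the $A$-factors. This restriction is exactly $p_{13}^*\mathfrak{F}\cdot p_{23}^*\mathfrak{F}$.

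By the displayed identity from the theorem of the square, the two correspondences agree. So $\mathfrak{F}(x*y)=\mathfrak{F}(x)\cup\mathfrak{F}(y)$, and applying $\mathfrak{F}^{-1}$ to both sides gives the statement for $\mathfrak{F}^{-1}$.

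\textbf{Compatibility with gradings.} It remains to check that this enhances (\ref{Fourier_stab}), that is, that the degree bookkeeping matches Theorem \ref{thm1.3_2}. The map $\mathfrak{F}_i$ sends $R^{2g-i}f^\vee_*$ to $R^if_*$. The convolution of pieces indexed by $2g-i$ and $2g-j$ lands in the piece indexed by $2g-i-j$, which matches the grading of the cup product. One can double-check this using $[N]$-eigenvalues: $[N]^*$ acts by $N^i$ on the relevant pieces, and $\mathrm{mult}\circ([N]\times[N])=[N]\circ \mathrm{mult}$.

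The step I expect to need the most care is the correspondence calculus in the relative setting. Specifically, one must justify identifying $\mathfrak{F}\circ\mathrm{mult}_*$ with $m^*\mathfrak{F}$ and the cup-product correspondence with restriction to the diagonal. This requires the flatness and properness needed for base change in relative Chow groups over $B$. It also requires matching the conventions of the shifts $[-]$, so that convolution has the correct cohomological degree, given that $\mathrm{mult}_*$ lowers degree by $2g$ from the relative dimension.
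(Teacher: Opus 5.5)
Your proposal is correct and is essentially the argument the paper has in mind, which is Beauville's: the isomorphism $(\mathrm{mult}\times\mathrm{id}_A)^*\CP\simeq p_{13}^*\CP\otimes p_{23}^*\CP$ is exactly the kernel-level statement behind the paper's remark that $\mathrm{FM}_\CP$ intertwines convolution (with kernel $\CO_{\mathrm{mult}}$) and $\otimes$. Taking $\mathrm{ch}$ of it, together with \eqref{diag} and Fourier stability for the gradings, decategorifies it to the theorem (small points: this isomorphism comes from the group law on $\mathrm{Pic}^0$ and the rigidification of $\CP$ rather than from the theorem of the square, and composing directly with the cycles $[\Gamma_{\mathrm{mult}}]$ and $[\Delta]$ neatly avoids any Todd-class corrections).
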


We note that Theorem \ref{thm1.3_3} is a \emph{decategorification} of the following statement: the monoidal structure $(D^b\mathrm{Coh}(A), \otimes)$, under the Fourier--Mukai transforms, intertwines with the convolution structure $(D^b\mathrm{Coh}(A^\vee), \ast)$, where the convolution kernel is given by the structure sheaf of the graph of the multiplication map:
\[
\CO_{\mathrm{mult}} \in \mathrm{Coh}\left(A^\vee \times_B A^\vee \times_B A^\vee\right).
\]

\subsection{A toy model for $P=C$}\label{sec1.4}
The $P=C$ phenomenon is a match of two structures from different origins for certain moduli spaces of sheaves which admit abelian fibrations. More precisely, it asserts that the location of a tautological class in the perverse filtration associated with the abelian fibration is determined exactly by its Chern grading --- the latter being defined intrinsically without reference to the abelian fibration. The $P=W$ conjecture of de Cataldo--Hausel--Migliorini \cite{dCHM} for $\mathrm{GL}_n$ can be deduced from $P=C$ for the moduli space of stable Higgs bundles; see \cite[Section 0.3]{dCMS}. Indeed, all known proofs \cite{MS_PW, HMMS, MSY} of $P=W$ are obtained through proving $P=C$. More recently, the $P=C$ phenomenon has been discovered to hold for moduli spaces other than those in non-abelian Hodge theory \cite{KPS, KLMP}. In this section, we explain through a toy model that $P=C$ is a consequence of the Fourier transform package.

Let $C \to B$ be a family of nonsingular curves lying on a nonsingular surface $S$. For convenience, we assume that $S$ is projective and the family $C \to B$ admits a section \mbox{$\iota: B \hookrightarrow C$}.\footnote{These assumptions are only for the convenience of the discussion and can be removed; see \cite[Sections 4 and 5]{MSY}.}
The relative Jacobian 
\[
f: J_C \to B
\]
which parameterizes degree $0$ line bundles on the fibers is a self-dual abelian fibration
\begin{equation*}
\begin{tikzcd}
J_C \arrow[dr, "f"'] & & J_C = J_C^\vee \arrow[dl, "f^\vee =f"] \\
& B &
\end{tikzcd}
\end{equation*}
with $\CP$ the normalized Poincar\'e line bundle. The relative Jacobian $J_C$ can be viewed as moduli space of $1$-dimensional torsion sheaves on $S$ whose universal sheaf $\CF$ on $S\times J_C$ can be compared with $\CP$ via the Abel--Jacobi map $\mathrm{AJ}: C \to J_C$ induced by the section $\iota: B \hookrightarrow C$. More precisely, let $\mathrm{ev}: C \to S$ be the evaluation map; then Riemann--Roch implies that there is a normalization of the Chern character such that
\[
\widehat{\mathrm{ch}}(\CF) = (\mathrm{ev}\times \mathrm{id}_{J_C})_* (\mathrm{AJ}\times \mathrm{id}_{J_C})^*\mathrm{ch}(\CP) = (\mathrm{ev}\times \mathrm{id}_{J_C})_* (\mathrm{AJ}\times \mathrm{id}_{J_C})^*\mathfrak{F}.
\]
Define the tautological classes
\[
c_k(\gamma):= \int_{\gamma} \widehat{\mathrm{ch}}_k(\CF) \in H^*(J_C, \BQ), \quad \gamma \in H^*(S, \BQ).
\]

\begin{thm}[$P=C$]
    We have
    \[
    \prod_{i=1}^s c_{k_i}(\gamma_i) \in H^*_{(k_1+k_2+\cdots+k_s)}(J_C, \BQ).
    \]
    In particular, the location of each tautological class in the perverse filtration, which is equivalent to the Leray filtration for $f: J_C\to B$, is determined by the Chern grading.
\end{thm}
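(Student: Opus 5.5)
The plan is to reduce everything to the ``multiplication by $N$'' eigenspace description of the decomposition \eqref{can_decomp}. Theorem \ref{thm1.2} tells us that $H^*_{(i)}(J_C,\BQ)$ is exactly the $N^i$-eigenspace of $[N]^*$ acting on $H^*(J_C,\BQ)$, for every integer $N$. The decomposition is multiplicative, so the product of a class in $H^*_{(a)}$ and a class in $H^*_{(b)}$ lies in $H^*_{(a+b)}$. It therefore suffices to treat a single class and show that $[N]^* c_k(\gamma) = N^k c_k(\gamma)$ for all $N$. The statement about products then follows by induction on $s$.

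For a single class, I would write out the definition
\[
c_k(\gamma) = \int_\gamma (\mathrm{ev}\times \mathrm{id}_{J_C})_* (\mathrm{AJ}\times \mathrm{id}_{J_C})^*\mathfrak{F}_k,
\]
that is, $c_k(\gamma)=p_{J_C *}\big(p_S^*\gamma \cdot (\mathrm{ev}\times\mathrm{id})_*(\mathrm{AJ}\times\mathrm{id})^*\mathfrak{F}_k\big)$. Here I use that $\widehat{\mathrm{ch}}_k(\CF)$ is the pushforward of the degree-$k$ piece $\mathfrak{F}_k=\ch_k(\CP)$. The codimension shifts by $1$ under $(\mathrm{ev}\times\mathrm{id})_*$, since $C\to S$ has relative codimension $1$ over $B$. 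So I need to fix the indexing convention so that $\widehat{\mathrm{ch}}_k$ corresponds to $\mathfrak{F}_k$; this is how the normalization in the statement is set up.

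The pullback $[N]^*$ on the $J_C$ factor commutes with $p_S^*$, with $(\mathrm{ev}\times\mathrm{id})_*$ and $(\mathrm{AJ}\times\mathrm{id})^*$, and with $p_{J_C*}$. The pushforward commutes because $\mathrm{id}_S\times[N]$ forms a Cartesian square with $\mathrm{ev}\times\mathrm{id}$ and is flat, or one can use base change for the proper map. So it reduces to computing $(\mathrm{id}_{J_C}\times[N])^*\mathfrak{F}_k$ on $J_C\times_B J_C$. Here the second factor is the one carrying the moduli parameter, i.e.\ the $A$-side of the self-dual fibration.

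The key input is the relation $(\mathrm{id}\times[N])^*\CP\simeq \CP^{\otimes N}$, the analogue on the $A$ factor of the relation used in the proof of Proposition \ref{prop1.1}. It holds by the symmetry of the normalized Poincar\'e bundle under the self-duality. Taking Chern characters gives $\ch(\CP^{\otimes N})=e^{N c_1(\CP)}$, so $(\mathrm{id}\times[N])^*\mathfrak{F}_k=N^k\mathfrak{F}_k$. Combining the steps yields $[N]^*c_k(\gamma)=N^kc_k(\gamma)$, hence $c_k(\gamma)\in H^*_{(k)}$, and multiplicativity finishes the proof.

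The main obstacle is the bookkeeping. First, one must check which factor $[N]$ acts on and that the identification $J_C=J_C^\vee$ makes $\CP$ symmetric. Second, the section $\iota$ normalizing $\CP$ along the zero section must match the normalization of $\widehat{\mathrm{ch}}$, so that no extra non-homogeneous terms appear. Finally, one should remark that for the smooth morphism $f$ the perverse filtration is the Leray filtration, and that it is split by the $H^*_{(i)}$; this gives the last sentence of the statement.
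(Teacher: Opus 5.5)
Your proof is correct, and it differs from the paper's at one step. The paper uses the projection formula to write $c_k(\gamma)$ as $\mathfrak{F}_k$ applied to the class $\mathrm{AJ}_*\mathrm{ev}^*\gamma$ on the dual side. It then quotes Fourier stability in the form \eqref{P=C_0}, $\mathrm{Im}(\mathfrak{F}_k)\subset H^*_{(k)}(J_C,\BQ)$, and finishes with the multiplicativity from Theorem \ref{thm1.2}. You never call on Fourier stability; instead you compute the $[N]^*$-weight directly. Your base-change checks are fine, since $\mathrm{ev}\times\mathrm{id}$ is the closed embedding $C\times_B J_C\subset S\times J_C$ and $[N]$ is finite \'etale. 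The identity $(\mathrm{id}\times[N])^*\CP\simeq\CP^{\otimes N}$ holds on $A^\vee\times_B A$ for any abelian scheme, by seesaw and rigidification along the zero section, so you do not need the self-duality to justify it. In effect you reprove the inclusion half of \eqref{P=C_0} for these particular classes. Your route also gives the product statement for free, because $[N]^*$ is a ring homomorphism, so the induction on $s$ is unnecessary. One small correction: $H^*_{(i)}$ is exactly the $N^i$-eigenspace only when $N^0,\dots,N^{2g}$ are distinct, i.e.\ for $|N|\geq 2$, not for every $N$. Your route is more self-contained, using only the eigenspace description and one line-bundle identity. The paper's route is the one that carries over to dualizable abelian fibrations. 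There, Theorem \ref{thm3.77} together with Theorem \ref{thm3.3}(a) gives the same conclusion with no multiplication-by-$N$ map available, so your eigenvalue argument has nothing to act with. That is why the paper phrases this toy model through the Fourier transform.
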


The proof is a direct consequence of Theorem \ref{thm1.3_2}. More precisely, the projection formula implies 
\[
c_k(\gamma) = \mathfrak{F}_k(\mathrm{ev}^*\mathrm{AJ}_*\gamma)
\]
which lies in $H^*_{(k)}(J_C, \BQ)$ by (\ref{P=C_0}); then the theorem follows from the multiplicativity of the decomposition $H^*(J_C, \BQ) = \bigoplus_{i} H^*_{(i)}(J_C, \BQ)$.

We see in this case that the Fourier transform explains the $P=C$ phenomenon. Furthermore, it clarifies why normalizations are required for the tautological classes \cite{dCHM, KPS, KLMP}: from the perspective of the Fourier transform, the normalizations arise from comparing a universal family with respect to the surface, to the normalized Poincar\'e line bundle with respect to the dual fibration.

\section{Dualizable abelian fibrations: definition and examples}\label{sec2}

\subsection{What is our purpose?}
Before introducing the (rather technical!) definition of dualizable abelian fibrations, we first discuss informally the underlying motivation.

In Section \ref{sec1}, we reviewed results for abelian schemes established by Beauville and Deninger--Murre over 30 years ago. A natural question is: which structures of abelian schemes can be \emph{generalized} to abelian fibrations with singular fibers? The examples of abelian fibrations we have in mind include:
\begin{enumerate}
    \item[(a)] \emph{Hitchin systems.} They are crucial in the $P=W$ conjecture, and serve as our main motivating example.
    \item[(b)] \emph{Compactified Jacobians.} The topology of compactified Jacobians has deep connections to knot invariants \cite{OY, ORS, Shen_survey} and the universal Abel--Jacobi theory \cite{BHPSS, BMP}.
    \item[(c)] \emph{Lagrangian fibrations.} They play a key role in the study of compact hyper-K\"ahler varieties.
\end{enumerate}

First, we should not expect that results parallel to Section \ref{sec1} hold for abelian fibrations when there are singular fibers. For example, the following theorem shows that the Beauville decomposition cannot exist even we include the ``mildest'' singular fibers.

\begin{thm}[\cite{BMSY1}]\label{thm2.1}
For $g\geq 3$, there exists a family $C \to B$ of curves of arithmetic genus $g$ with at worst one simple node, such that the total space of the relative compactified Jacobian~$\overline{J}_C$ is nonsingular, and the perverse filtration associated with $f: \overline{J}_C\to B$ does not admit any multiplicative splitting. Consequently, no isomorphism given by the decomposition theorem 
\[
Rf_*\BQ_{\overline{J}_C} \simeq \bigoplus_{i = 0}^{2g}\CH_{(i)}, \quad \CH_{(i)} := {^\Fp}H^{i + \dim B}(Rf_{*}\BQ_{\overline{J}_C})[-i - \dim B] \in D^b_c(B)
\]
is multiplicative.
\end{thm}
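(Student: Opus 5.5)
The plan is to construct the family explicitly and then find an obstruction to multiplicativity that is forced by the perverse filtration alone. Fix $g\geq 3$ and a general complete one-dimensional family of genus-$g$ curves acquiring a single node. Concretely, take a general pencil-type family $C\to B$ over a curve, or a small disc-like base of dimension one, so that the singular fibers are irreducible with exactly one simple node. The first step is to check that the total space of $C$ is nonsingular near the node, which holds because the family is versal to first order. Then $\overline{J}_C$ is nonsingular, since the compactified Jacobian of a versally deformed nodal curve has smooth total space. The morphism $f$ is $\delta$-regular, so Ng\^o's support theorem applies. Over a one-dimensional base with irreducible fibers, every summand of $Rf_*\BQ$ is the intermediate extension of the local system $R^if_*\BQ$ from the smooth locus $U$, so
\[
\CH_{(i)} = j_{!*}\bigl(R^if_*\BQ_{J}|_U[\dim B]\bigr)[-\dim B].
\]

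Suppose, for contradiction, that some splitting $\bigoplus_i \CH_{(i)}$ is multiplicative. Restrict it to the stalk at a nodal point $b$. There $H^*(\overline{J}_{C_b})$ inherits a multiplicative grading whose pieces are the stalks $\CH_{(i),b}$, and these are computed as invariants of the local monodromy:
\[
\CH_{(i),b} = \bigl(\wedge^i H^1(C_t)\bigr)^{T}\quad\text{in degree } i .
\]
Here $T$ is a Picard--Lefschetz transvection on $H^1=\BQ^{2g}$ with vanishing cycle $\delta$. The invariants of $\wedge^\bullet H^1$ form the subalgebra generated by $V=\delta^\perp$. Multiplicativity on the stalk alone is probably consistent, since the stalk algebra $\wedge^\bullet(\delta^\perp)$ is genuinely graded. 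So the obstruction has to use more global information, namely the extension data. The stalk cohomology $H^*(\overline{J}_{C_b})$, where $\overline{J}_{C_b}$ is a nodal-curve compactified Jacobian (a $\BP^1$-bundle over $J^{g-1}$ glued along sections), should receive the cup product, and a splitting must lift $\CH_{(i)}$ to $Rf_*$ compatibly near $b$.

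The key step I would aim for is global. Take $B$ complete and compute cohomology. A multiplicative splitting induces a multiplicative splitting of the perverse filtration on $H^*(\overline{J}_C)$. I would then exhibit classes $\alpha\in P_1$ and $\beta\in P_1$ with $\alpha\cup\beta\notin P_2$ modulo a correction that no choice of splitting can fix. The natural candidates are tautological classes, for example $c_0(\mathrm{pt})$ and the theta-type class $c_1$. To carry this out, first compute $H^*(\overline{J}_C)$ with its perverse filtration via the decomposition theorem and the Leray spectral sequence over the curve $B$. Then parametrize all splittings, which differ by morphisms $\CH_{(i)}\to\CH_{(j)}[\,i-j\,]$ for $j>i$, and these Ext groups are computable on a curve. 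Finally, show that the multiplicativity equations have no solution.

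The main obstacle is this last computation: producing an explicit cubic or quadratic relation whose failure persists for every splitting. I expect it to require $g\ge 3$ because a relation among $\wedge^3$ of the $H^1$-classes intervenes. I would first try it with the family chosen so that $H^1(B,R^1f_*)$ is nonzero, which gives the splittings enough room to be constrained, and test multiplicativity on $\theta^{2}$ versus the product of three degree-one classes.
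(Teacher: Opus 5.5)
There is a genuine gap. The decisive step is missing: you never produce an obstruction, as you acknowledge yourself. More seriously, the setting you commit to, a general family over a curve, cannot yield one. (The survey gives no proof and defers to the cited reference, so I am judging the proposal on its own terms.)

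Locally nothing happens. The stalk of $\CH_{(i)}$ at a nodal point sits only in degree $i$, so $P_\bullet H^k(\overline{J}_{C_b})$ is concentrated in perversity $k$, and the splitting is unique and multiplicative. Your stalk computation is also off. With $\langle\epsilon,\delta\rangle=1$ and $W=\langle\delta,\epsilon\rangle^{\perp}$, one has $(\wedge^\bullet H^1)^T=\wedge^\bullet(\delta^\perp)\oplus\delta\wedge\epsilon\wedge\wedge^{\bullet-2}W$. This contains the polarization and gives $\dim H^k(\overline{J}_{C_b})=\binom{2g-1}{k}+\binom{2g-2}{k-2}$.

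Globally, for a general family over a complete curve $B$ (monodromy Zariski dense in $\mathrm{Sp}_{2g}$), a multiplicative splitting in fact \emph{exists}. Write $R^1=R^1f_*\BQ|_U$. Then $\mathrm{Gr}^P_iH^k=H^{k-i}(B,j_*\wedge^iR^1)$, so $H^{2k+1}$ is pure of perversity $2k$. The even piece $H^{2k}$ has three graded parts: $\BQ\theta^k$, $H^1(B,j_*\wedge^{2k-1}R^1)$, and $P_{2k-2}H^{2k}=\BQ F\vartheta^{k-1}$, where $F$ is the fiber class and $\vartheta$ is any lift of $\theta$.

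Set $H^{2k}_{(2k)}=\BQ\vartheta^k$ and $H^{2k}_{(2k-1)}=V_k$. Since $P_\bullet$ is multiplicative (Theorem \ref{thm3.3}(a) with Theorem \ref{thm2.6}), every multiplicativity condition is then automatic except two: $\vartheta^{g+1}=0$ and $\vartheta V_k\subset V_{k+1}$.

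The first is achieved by replacing $\vartheta$ with $\vartheta+aF$, since $F^2=0$ and $F\vartheta^g\neq 0$. For the second, take $p\in H^1(B,j_*\wedge^iR^1)$ primitive for $\theta\wedge$, which satisfies hard Lefschetz on $\bigoplus_i H^1(B,j_*\wedge^iR^1)$. Lift $p$ to some $\tilde p$. Then $\vartheta^{g-i+1}\tilde p\in P_{2g-i+1}H^{2g-i+3}=\BQ F\vartheta^{g-(i-1)/2}$, and this is killed by subtracting a multiple of $F\vartheta^{(i-1)/2}$ from $\tilde p$. Now lift $\bar\vartheta^m p$ by $\vartheta^m\tilde p$ and let $V_k$ be the span of these lifts. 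So no general family over a curve can serve as the example, and the obstruction has to come from a base of dimension at least two. Your proposed tests on degree-one classes are vacuous anyway, since here $H^1(\overline{J}_C)=f^*H^1(B)$.

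Two further structural problems. First, looking for $\alpha,\beta\in P_1$ with $\alpha\cup\beta\notin P_2$ is hopeless, because $P_\bullet$ \emph{is} multiplicative for these fibrations. What must fail is the existence of compatible graded lifts. Second, parametrizing splittings by morphisms $\CH_{(i)}\to\CH_{(j)}[i-j]$ only addresses decomposition-theorem isomorphisms, which is the weaker second sentence of the statement. The first sentence quantifies over \emph{all} splittings of $P_\bullet H^*(\overline{J}_C)$.

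What is needed is an identity forced by any multiplicative splitting that the global geometry violates. A natural candidate: the perversity-$2$ component $\vartheta_{(2)}$ of a relative theta class must satisfy $\vartheta_{(2)}^{g+1}\in H_{(2g+2)}=0$. Over a curve this is a single numerical condition, absorbed by $aF$ as above. Over a higher-dimensional base it becomes a system of conditions, which the available corrections from $f^*H^2(B)$ and $P_1H^2$ may be unable to meet because of contributions along the nodal divisor. Exhibiting a family where such an obstruction provably survives is the actual content of the theorem, and your proposal supplies neither the family nor the obstruction.
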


The failure of the Beauville decomposition may be explained by the fact that there is no appropriate extension of the ``multiplication by $N$'' structure over the singular fibers. Instead, we may hope to build our theory on a duality 
\[
\begin{tikzcd}
M \arrow[dr, "f"'] & & M^\vee \arrow[dl, "f^\vee"] \\
& B &
\end{tikzcd}
\]
with a pair of objects $\CP, \CP^{-1}$ on the relative product $M^\vee\times_B M$ that generalize the Poincar\'e line bundles. In many interesting examples, the existence of such a Poincar\'e sheaf was established by Arinkin \cite{A2}.

The Poincar\'e objects $\CP, \CP^{-1}$ still yield Fourier transforms
\[
\begin{tikzcd}
Rf_*\BQ_M[-] \arrow[r, bend left=30, "{\mathfrak{F}^{-1}}"] 
  & Rf^\vee_* \BQ_{M^\vee}[-] \arrow[l, bend left=30, "{\mathfrak{F}}"].
\end{tikzcd}
\]
However, the Fourier transform is not expected to satisfy Theorem \ref{thm1.3_3} with some convolution product defined on $M^\vee$. This was illustrated in the following example.

\begin{thm}[\cite{BMSY2}] \label{thm2.2}
For $g \geq 4$, there exist two birational abelian fibrations $f_1: M_1 \to B$ and $f_2: M_2 \to B$ given by fine compactified Jacobians associated with stable curves of genus $g$, which share a common dual $M^\vee \to B$, \emph{i.e.}, there exist Poincar\'e sheaves $\CP_i$ inducing derived equivalences
\[
\mathrm{FM}_{\CP_i}: \mathrm{}D^b\mathrm{Coh}(M^\vee) \xrightarrow{\simeq} D^b\mathrm{Coh}(M_i), \quad i=1,2,
\]
but whose cohomology rings are not isomorphic as $H^*(B. \BQ)$-algebras:
\[
\left(H^*(M_1, \BQ), \cup\right) \not\simeq  \left(H^*(M_2, \BQ), \cup \right).
\]
\end{thm}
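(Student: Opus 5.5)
Since this is an existence statement, the plan is to build an explicit example and separate $M_1$ and $M_2$ by a ring-theoretic invariant that is visible over a one-dimensional base. I would take a one-parameter family $C \to B$ of stable genus $g$ curves over a smooth curve $B$ (or a small disk). The general fiber is smooth, and a special fiber $C_0$ is a reducible nodal curve, for instance two components meeting in several nodes, or a curve with a separating node together with extra nodes. The total space $C$ should be smooth, so that fine compactified Jacobians have smooth total space. Two nondegenerate stability conditions (polarizations) $\phi_1,\phi_2$ give fine compactified Jacobians $M_i = \overline{J}^{\phi_i}_C \to B$. These are birational, since both contain the relative Jacobian of the smooth locus. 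For the common dual I would take $M^\vee = \overline{J}^{\phi}_C$ for a third fine stability $\phi$, or $M^\vee = M_1$. Arinkin's Poincar\'e sheaf, extended by Melo--Rapagnetta--Viviani to fine compactified Jacobians of nodal curves with locally planar singularities, gives kernels $\CP_i$ on $M^\vee \times_B M_i$ inducing equivalences $\mathrm{FM}_{\CP_i}$. I would cite that existence rather than reprove it.

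The main step is to show $(H^*(M_1,\BQ),\cup) \not\simeq (H^*(M_2,\BQ),\cup)$ as $H^*(B,\BQ)$-algebras. Over a disk, $M_i$ retracts onto its central fiber $\overline{J}^{\phi_i}_{C_0}$, so it suffices to compare the cohomology rings of the two central fibers. For nodal curves these fine compactified Jacobians have explicit stratifications indexed by subsets of the nodes. Their normalizations are built from $\BP^1$-bundles over products of Jacobians of the partial normalizations, glued along sections. Equivalently, the central fiber is a torus-type degeneration whose combinatorics is encoded by the Oda--Seshadri tiling of the $\phi_i$-chamber of the polyhedral decomposition. I would choose $\phi_1,\phi_2$ so that the number of irreducible components is the same (it equals the complexity of the dual graph) but the intersection pattern of the components differs. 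For example, one chamber gives a configuration where some pair of components meets in a codimension-one stratum, while the other gives a different incidence. That difference then shows up in the cup product on the weight-graded low-degree part: products of classes of components, and products of $H^1$ classes from the smooth Jacobian part.

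To make the invariant robust under arbitrary algebra isomorphisms, I would use a numerical invariant. One option is the rank of the cup-product map $\mathrm{Sym}^2 H^2 \to H^4$, or the dimension of the subalgebra generated by $H^1$ and $H^2$, computed via Mayer--Vietoris or the weight spectral sequence from the explicit component description. The condition $g \geq 4$ should be exactly what allows the dual graph to admit two chambers with combinatorially different tilings. For instance, the graph should have enough edges that the two tilings yield nonisomorphic zonotopal/cellular structures on the torus that governs $W_0H^*$, which is a ring invariant.

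I expect the main obstacle to be this last computation: choosing the curve and stabilities so that the difference is provably not removable by any ring automorphism. I would first try to detect it in the weight-zero part $W_0H^*(\overline{J}^{\phi_i}_{C_0})$. That part is the cohomology ring of the dual complex, which is a torus with a $\phi_i$-dependent cellular structure; that by itself is insensitive to the structure. So the cup-product structure would have to be detected by mixing $W_0$ with the higher-weight pieces. Concretely, I would compute $H^*$ of the normalization strata and compare the kernels of the multiplication maps by a direct, though laborious, Mayer--Vietoris calculation in the smallest example ($g=4$).
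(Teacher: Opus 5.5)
Your proposal has a genuine gap: the one assertion that carries the theorem, namely $(H^*(M_1,\BQ),\cup)\not\simeq(H^*(M_2,\BQ),\cup)$, is never established. The common dual and the kernels $\CP_i$ are standard input (Arinkin's Poincar\'e sheaf extended to reduced locally planar curves; this is Theorem~\ref{thm2.8}). So the entire content is to exhibit an algebra invariant that takes different values on $M_1$ and $M_2$.

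You list candidates (the rank of $\mathrm{Sym}^2H^2\to H^4$, the subalgebra generated by $H^1$ and $H^2$, ``products of component classes'') but compute none of them. You also concede that the only piece you can actually describe, $W_0H^*$, does not see $\phi$. There is a structural reason this is delicate. If $M_1,M_2$ are dualizable with common dual $M^\vee$, Theorem~\ref{thm3.3}(b) identifies both $\bigoplus\mathrm{Gr}^P_iH^d(M_1,\BQ)$ and $\bigoplus\mathrm{Gr}^P_iH^d(M_2,\BQ)$ with the same convolution algebra on $M^\vee$. Any distinguishing invariant must therefore detect how the perverse filtration fails to split multiplicatively. A different incidence pattern of components is not by itself such an invariant. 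Your heuristic that $g\geq 4$ is where combinatorially different tilings first appear is also unsupported.

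The proposed setup also fails as stated.

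\emph{Smoothness.} Over a one-dimensional base, smoothness of $C$ does not make $\overline{J}^{\phi}_C$ smooth. Suppose $C$ is locally $xy=t$ at two nodes and a sheaf is not locally free at both. Then the relative compactified Jacobian is locally $u_1v_1=t=u_2v_2$, a threefold node. So $M_i$ is not an abelian fibration in the sense of Section~\ref{sec2.2}; you need a base over which the nodes are smoothed independently.

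\emph{Choice of central fibres.} Your sample central fibres cannot distinguish stability conditions. For two components meeting in $k$ nodes, the nondegenerate stability conditions form a one-parameter family. Its chambers are all exchanged by twisting with a line bundle of bidegree $(1,-1)$, e.g.\ $\CO_C(\sigma_1-\sigma_2)$ for local sections $\sigma_i$ through smooth points of the two components. Hence $M_1\simeq M_2$ over the disk. A separating node only fixes the degree splitting, again up to twist. You would need dual graphs with at least three vertices.

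\emph{What the paper does.} The paper instead uses the global examples $M_i=\overline{J}^{\phi_i}_{g,1}\to\overline{\CM}_{g,1}$, with common dual $\overline{J}^{\phi'}_{g,1}$ for any nondegenerate $\phi'$ by Theorem~\ref{thm2.8}. It imports the non-isomorphism of $H^*(\overline{\CM}_{g,1},\BQ)$-algebras for $g\geq 4$ from \cite[Theorem 0.5]{BMSY2}. Restricting to a disk discards exactly the $H^*(B,\BQ)$-algebra structure that statement concerns. Nothing in your plan shows the difference is already visible on a single fibre.
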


In particular, Theorem \ref{thm2.2} shows that there is no well-defined convolution product on $M^\vee$ which is matched with the cup-product for both duals $M_1, M_2$ through a Fourier transform.

The two counterexamples above suggest that a theory of Fourier transforms for abelian fibrations (which should cover the abelian fibrations considered in Theorems \ref{thm2.1} and \ref{thm2.2}), if it exists, is considerably subtler than the corresponding theory for abelian schemes. We aim to single out a class of abelian fibrations that admit dual fibrations, for which the resulting duality enjoys favorable properties, while still including a sufficiently wide class of interesting geometric examples. We introduce this class of abelian fibration in the next section, which we call \emph{dualizable abelian fibrations}.

\subsection{Dualizable abelian fibrations}\label{sec2.2}

We begin by carefully defining what we mean by \emph{abelian fibrations}. We say that $f: M\to B$ is an abelian fibration of relative dimension~$g$, if
\begin{enumerate}
    \item[(a)] $M, B$ are nonsingular,
    \item[(b)] $f$ is proper and flat, and
    \item[(c)] $f: M\to B$ is generically an abelian scheme of relative dimension $g$.
\end{enumerate}
More precisely, the condition (c) says that there is a Zariski open subset $U \subset B$ over which $f_U: M_U \to U$ is an abelian scheme of relative dimension $g$. In applications, we sometimes consider the case where $f_U: M_U \to U $ is a torsor under an abelian scheme; the framework can be adapted accordingly to accommodate this situation; see Section \ref{sec3.4}.

In the following, we list the axioms for an abelian fibration of relative dimension $g$ to be \emph{dualizable}; clearly these axioms are modeled on the corresponding properties of abelian schemes that we discussed in Section \ref{sec1}.
\medskip

\noindent {\bf Axiom A: Dual abelian fibration exists.} There exists an abelian fibration
\[
f^\vee: M^\vee \to B
\]
such that there is a Zariski open subset $U \subset B$ over which we have dual abelian schemes: 
\begin{equation}\label{dual_U}
\begin{tikzcd}
M_U \arrow[dr, "f_U"'] & & M_U^\vee = \mathrm{Pic}^0(M_U) \arrow[dl, "f_U^\vee"] \\
& U &.
\end{tikzcd}
\end{equation}

\medskip

\noindent {\bf Axiom B: Full support.} Both $Rf_* \BQ_M$ and $Rf^\vee_* \BQ_{M^\vee}$ have full support. More precisely, every (shifted) simple perverse sheaf that appears in the decomposition theorem for either $Rf_* \BQ_M$ or $Rf^\vee_* \BQ_{M^\vee}$ has support $B$.

\begin{rmk}
Axiom B suggests that it may be better to restrict ourselves to $\delta$-regular weak abelian fibrations, for which Ng\^o's support techniques \cite{Ngo} apply. 
\end{rmk}
\medskip

\noindent {\bf Axiom C: Poincar\'e line bundle extends.} By Axiom A, we have the normalized Poincar\'e line bundle
\[
\CP_U \in \mathrm{Pic}\left( M^\vee_U \times_U M_U\right).
\]
We require that there is an extension 
\[
\CP \in D^b\mathrm{Coh}\left( M^\vee \times_B M \right)
\]
of the Poincar\'e line bundle, such that there exists $\CP^{-1} \in D^b\mathrm{Coh}(M \times_B M^\vee)$ with 
\[
\CP^{-1}\circ\CP\simeq \CO_{\Delta_{M^\vee/B}}, \quad \CP \circ \CP^{-1} \simeq \CO_{\Delta_{M/B}}.
\]
Here $\circ$ stands for the composition of the Fourier--Mukai kernels. 

In other words, Axiom C asserts that the Fourier--Mukai duality for (\ref{dual_U}) extends over $B$.
\medskip

\noindent {\bf Axiom C+: Poincar\'e line bundle extends nicely.} The objects 
\begin{equation}\label{multi_N}
\CP^{-1}\circ \CP^{\otimes N} \in D\mathrm{Coh}(M^\vee \times_B M^\vee), \quad \CP ^{\otimes N}\circ \CP^{-1} \in D\mathrm{Coh}(M \times_B M)
\end{equation}
are supported in codimension $g$ on $M^\vee \times_B M^\vee$ and $M\times_B M$ respectively for infinitely many integers $N\geq 1$.

\begin{rmk}
We note that the objects (\ref{multi_N}) above may \emph{not} be bounded, as $\CP, \CP^{-1}$ are not assumed to be \emph{perfect} on the possibly singular variety $M^\vee\times_B M$. Nevertheless, the codimension of their support is still well-defined. Over the abelian scheme locus $U$, the support of~(\ref{multi_N}) is given by the graphs of ``the multiplication by $N$'' maps for any $N\geq 1$, which clearly has codimension $g$. Axiom C+ says that there is no bigger component of the support over the singular locus of $f, f^\vee$.
\end{rmk}

\medskip

\noindent {\bf Axiom D: Convolution extends nicely.} Let $\Delta^{\mathrm{sm}}_{M/B}$ be the relative small diagonal in $M\times_B M \times_B M$. Then the object
\[
\CK: = \CP^{-1}\circ \CO_{\Delta^{\mathrm{sm}}_{M/B}}\circ \left( \CP \boxtimes \CP \right) \in D^b\mathrm{Coh}(M^\vee \times_B M^\vee \times_B M^\vee)
\]
is supported in codimension $g$, and the Zariski closure of
\[
\Gamma_{\mathrm{mult}} \subset M^\vee_U \times_U M^\vee \times_U M_U^\vee
\]
is the only component of pure codimension $g$ in its support.

\begin{rmk}
By definition, the Fourier--Mukai transform associated with $\CK$ defines a convolution product on $D^b\mathrm{Coh}(M^\vee)$ which intertwines the tensor product on $D^b\mathrm{Coh}(M)$; over the abelian scheme locus $U$, the object $\CK$ is exactly the structure sheaf of the graph $\Gamma_{\mathrm{mult}}$ of the multiplication map for the abelian scheme $M^\vee_U \to U$.
\end{rmk}

\begin{defn}[Dualizable abelian fibration]
    We say that $f: M \to B$ is a dualizable abelian fibration of relative dimen\-sion~$g$, if it is an abelian fibration of relative dimension $g$ satisfying Axioms A, B, C, C+,~D.
\end{defn}

\begin{rmk}
Although the group structures of $M_U, M^\vee_U$ cannot be extended over the singular fibers, Axioms C+ can be viewed as “poor man’s substitute” for the compatibility between the ``multiplication by $N$'' map and the Poincar\'e line bundle used in the proof of Proposition \ref{prop1.1} for abelian schemes, and Axiom D can be viewed as ``poor man's substitute" for the existence of a multiplication map (\ref{mult_0}).
\end{rmk}

\subsection{Examples}\label{sec2.3} As the axioms are modeled on abelian schemes, the following proposition is obvious.

\begin{prop}
    An abelian scheme $f: A \to B$ is a dualizable abelian fibration whose dual is given by the dual abelian scheme $f^\vee: A^\vee \to B$. 
\end{prop}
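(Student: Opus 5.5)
We verify the axioms one at a time, taking $U = B$ and $M^\vee = A^\vee = \mathrm{Pic}^0(A)$.

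\emph{Axiom A.} The morphism $f^\vee: A^\vee \to B$ is itself an abelian scheme of relative dimension $g$ over the nonsingular base $B$. Hence $A^\vee$ is nonsingular, and $f^\vee$ is proper, flat and smooth. So $f^\vee$ is an abelian fibration, and Axiom A holds tautologically.

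\emph{Axiom B.} Since $f$ and $f^\vee$ are smooth and proper, the decomposition (\ref{thm1.3_1}) and its analogue for $f^\vee$ express $Rf_*\BQ_A$ and $Rf^\vee_*\BQ_{A^\vee}$ as sums of shifted local systems $R^if_*\BQ_A[-i]$ on $B$. A local system shifted by $\dim B$ is a perverse sheaf whose simple constituents are intermediate extensions of irreducible local systems on all of $B$. So every support equals $B$.

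\emph{Axiom C.} Take $\CP$ to be the normalized Poincar\'e line bundle and $\CP^{-1} = \CP^\vee \otimes p_A^*\omega_{A/B}[g]$. Mukai's theorem, in its relative form over $B$ as recalled in Section~\ref{sec1.2}, gives $\CP^{-1}\circ\CP \simeq \CO_{\Delta_{A^\vee/B}}$ and $\CP\circ\CP^{-1}\simeq \CO_{\Delta_{A/B}}$.

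\emph{Axiom C+.} We use $([N]\times\mathrm{id})^*\CP \simeq \CP^{\otimes N}$, which also holds with $[N]$ acting on the $A$ factor. Base change for the flat composition of kernels then gives
\[
\CP^{-1}\circ\CP^{\otimes N} \simeq (\mathrm{id}\times [N])^*\bigl(\CP^{-1}\circ \CP\bigr) \simeq \CO_{\Gamma}
\]
up to transposition, where $\Gamma$ is the preimage of the diagonal, i.e.\ the graph of $[N]$, or its transpose $\{(x,y): Nx = y\}$, possibly thickened by the finite flat group $A^\vee[N]$. This is a finite cover of $A^\vee$, so it has codimension $g$ in $A^\vee\times_B A^\vee$. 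The same holds symmetrically on $A\times_B A$.

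\emph{Axiom D.} This is the step requiring the most care. We compute $\CK$ explicitly. On $A^\vee \times_B A^\vee \times_B A$ we have $p_{13}^*\CP\otimes p_{23}^*\CP \simeq (\mathrm{mult}\times \mathrm{id})^*\CP$, by bilinearity of the Poincar\'e bundle. Composing this with $\CP^{-1}$ and using flat base change along $\mathrm{mult}\times\mathrm{id}$ gives
\[
\CK \simeq (\mathrm{mult}\times\mathrm{id})^*\bigl(\CP^{-1}\circ\CP\bigr) \simeq (\mathrm{mult}\times \mathrm{id})^*\CO_{\Delta_{A^\vee/B}} \simeq \CO_{\Gamma_{\mathrm{mult}}}.
\]
Here $\mathrm{mult}$ is smooth, so the pullback is underived. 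The graph $\Gamma_{\mathrm{mult}}$ is irreducible over each component of $B$, has codimension $g$, and equals its own closure. So it is the unique codimension-$g$ component of the support.

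The main obstacle is bookkeeping rather than anything conceptual. One must check that the composition of relative Fourier--Mukai kernels is compatible with these pullbacks, in particular that flat base change applies. One must also keep track of which factor of $\CP^{-1}$ carries the twist $\omega_{A/B}[g]$ and the sign conventions. That twist is a pullback of a line bundle from $B$, so it does not affect supports.
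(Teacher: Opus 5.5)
Your axiom-by-axiom check is correct, and it is the verification the paper has in mind when it calls this proposition obvious without giving further argument. The ingredients are relative Mukai duality for Axiom C; $([N]\times\mathrm{id})^*\CP\simeq\CP^{\otimes N}$ plus flat base change for Axiom C+; and $(\mathrm{mult}\times\mathrm{id})^*\CP\simeq p_{13}^*\CP\otimes p_{23}^*\CP$ for Axiom D, which gives $\CK\simeq\CO_{\Gamma_{\mathrm{mult}}}$.

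One small cleanup: your hedge about a thickening by $A^\vee[N]$ is unnecessary. Since $[N]\times\mathrm{id}$ is flat, $\CP^{-1}\circ\CP^{\otimes N}\simeq([N]\times\mathrm{id})^*\CO_{\Delta_{A^\vee/B}}$ is the structure sheaf of the honest graph $\{(x,Nx)\}\cong A^\vee$.
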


A main class of examples of dualizable abelian fibrations is given by compactified Jacobian fibrations. Let $C \to B$ be a proper flat family of integral locally planar curves of arithmetic genus $g$. We consider the relative compactified Jacobian parameterizing $(C_b, F_b)$ with $b\in B$ and $F_b$ a rank $1$ torsion-free sheaf on $C_b$ of degree $0$. Assume that $\overline{J}_C$ is nonsingular; then the natural morphism $f: \overline{J}_C \to B$ defines an abelian fibration.

\begin{thm}[\cite{MSY}]\label{thm2.6}
    With the notation as above, we have that $f: \overline{J}_C \to B$ is a dualizable abelian fibration whose dual is given by itself $f=f^\vee$.
\end{thm}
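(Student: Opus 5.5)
We verify the axioms one at a time, using Arinkin's autoduality for compactified Jacobians of integral locally planar curves \cite{A2} together with Ngô's support theorem \cite{Ngo}.

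\textbf{Axiom A.} Over the locus $U\subset B$ of smooth curves, $\overline{J}_C|_U=J_{C_U}$ is an abelian scheme. It is principally polarized by the theta divisor, so $J_{C_U}\simeq \mathrm{Pic}^0(J_{C_U})$. Hence $f$ serves as its own dual.

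\textbf{Axiom B.} Full support for $Rf_*\BQ_{\overline{J}_C}$ follows from Ngô's support theorem. The relative Jacobian $P=\mathrm{Pic}^0_{C/B}$ acts on $\overline{J}_C$. Nonsingularity of $\overline{J}_C$ for a family of locally planar curves forces $\delta$-regularity, i.e. $\mathrm{codim}\{b:\delta(b)\ge \delta\}\ge\delta$; this is the standard versal-deformation argument, since smoothness of the total space means the family is locally versal-like enough to give the dimension bound. Since every fiber $C_b$ is integral, the affine part of $P_b$ has no nontrivial component group or torus obstruction. Ngô's criterion then shows that every support $Z$ must satisfy $\mathrm{codim} Z\le \delta_Z$, with equality forced by the top-degree cohomology argument; integrality makes the relevant $\pi_0$ trivial, so only $Z=B$ survives. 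This is the Ngô support theorem for integral planar curves, as in \cite{MS_chi, MY}.

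\textbf{Axioms C and C+.} For Axiom C, take Arinkin's Poincaré sheaf $\CP$ on $\overline{J}_C\times_B\overline{J}_C$. It is a maximal Cohen--Macaulay sheaf extending $\CP_U$, and Arinkin proves that the Fourier--Mukai transform is an equivalence with inverse kernel $\CP^{-1}=\CP^\vee\otimes(\text{relative dualizing})[g]$, up to a sign on one factor. Axiom C+ is the step I expect to be the main obstacle, since the supports of $\CP^{-1}\circ\CP^{\otimes N}$ over singular fibers are not controlled by any group law. The plan is as follows.

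\begin{enumerate}
\item Stratify $B$ by the $\delta$-invariant.
\item Over a stratum $B_\delta$ of codimension $\ge\delta$, the fibers of $\overline{J}_C\times_B\overline{J}_C$ have dimension $2g$. The support of the convolution restricted there is bounded using the $P$-equivariance of $\CP$: $\CP$ is equivariant for $P\times P$ twisted by the Poincaré bundle on the smooth part.
\item On the fiber over $b$, the support of $\CP^{-1}\circ\CP^{\otimes N}$ therefore lies in a locus of dimension $\le g+\delta$. This uses the fact that the abelian part of $P_b$ of dimension $g-\delta$ acts through multiplication by $N$ on the second factor, so the support is a union of orbits of relative dimension at most $g+\delta$.
\item Combining with $\mathrm{codim} B_\delta\ge\delta$ gives total codimension $\ge g$.
\end{enumerate}

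\textbf{Axiom D.} The same stratification and equivariance argument applies to $\CK$ on the triple product. $P$-equivariance confines the support over $B_\delta$ to dimension $\le 2g+\delta$ fiberwise, so $\delta$-regularity gives codimension $\ge g$. Equality holds only on $B_0=U$, where $\CK=\CO_{\Gamma_{\mathrm{mult}}}$. Hence the closure of $\Gamma_{\mathrm{mult}}$ is the unique codimension-$g$ component, provided strict inequality holds on deeper strata. This in turn needs the strengthened bound $\mathrm{codim} B_\delta>\delta$ for $\delta\ge1$, or a finer estimate on the affine-part orbits. I would obtain this from the fact that $\mathrm{codim} B_\delta\ge \delta$ combined with a strict dimension drop of the support along the affine directions. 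This refinement is the delicate point I would check most carefully.
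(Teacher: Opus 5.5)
Your overall route is the paper's: $\delta$-regularity from Lemma~\ref{lem2.7} is fed into Ng\^o's support theorem for Axiom~B, into Arinkin's equivalence for Axiom~C, and into Arinkin-style support estimates on the $\delta$-strata for Axioms~C+ and~D. Your counts $g+\delta$ and $2g+\delta$ against $\mathrm{codim}\,B_\delta\ge\delta$ are the right ones for the ``codimension $\ge g$'' statements. Three justifications need tightening.
In Axiom~B the logic runs the other way from what you wrote. Ng\^o gives $\mathrm{codim}\,Z\le\delta_Z$, $\delta$-regularity forces equality, and only then does the top-degree argument with irreducible fibers exclude $Z\neq B$.
For Axiom~C, Arinkin's equivalence is itself proved via $\mathrm{codim}\,B_\delta\ge\delta$; this is where $\delta$-regularity of your family enters.
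In step~3 of C+, invariance of the support under a $J_{C_b}$-action gives no upper bound on its dimension. The bound comes from a vanishing statement: a sheaf on $\overline{J}_{C_b}$ that is $J_{C_b}$-equivariant up to a nontrivial multiplicative line bundle has zero cohomology. Multiplicative line bundles on $J_{C_b}$ come from its $(g-\delta)$-dimensional abelian quotient, so this cuts the $2g$-dimensional fiber down to $g+\delta$.

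The genuine gap is the second clause of Axiom~D, which you flag but do not close. Your claim that equality occurs only over $U$ is unproved, and the first remedy you propose is false. In the Beauville--Mukai system of a K3 surface of Picard rank one, $\overline{J}_C$ is nonsingular and one-nodal curves are dense in the discriminant divisor, so $\mathrm{codim}\,B_1=1$. With only the bound $2g+\delta$, nothing excludes a component of $\mathrm{supp}\,\CK$ of dimension $\dim B+2g$ over $B_1$. Such a component does not dominate $B$, so it cannot lie in the irreducible $\overline{\Gamma}_{\mathrm{mult}}$. What you need is a strict fiberwise drop off $\overline{\Gamma}_{\mathrm{mult}}$, and the character argument cannot provide it, since it is blind to the affine part.

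One way to get the drop:
\begin{itemize}
\item Arinkin's sheaf satisfies $(\mu\times 1)^*\CP\simeq p_{13}^*\CP\otimes p_{23}^*\CP$ for the action $\mu\colon J_C\times_B\overline{J}_C\to\overline{J}_C$. Both sides are maximal Cohen--Macaulay and agree off a locus of codimension $\ge 2$.
\item Flat base change and Axiom~C then give $\CK|_{J_C\times_B\overline{J}_C\times_B\overline{J}_C}\simeq(\mu\times 1)^*(\CP^{-1}\circ\CP)\simeq\CO_{\Gamma_\mu}$, and symmetrically for the second factor.
\item These graphs are irreducible and agree with $\Gamma_{\mathrm{mult}}$ over $U$, hence lie in $\overline{\Gamma}_{\mathrm{mult}}$.
\item The rest of the support lies where both source points are non-locally-free. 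This locus has fiber dimension $\le 2(g-1)+g$, because $J_{C_b}$ is dense in the irreducible $\overline{J}_{C_b}$.
\item The character cut brings this to $\le 2g+\delta-2$, i.e.\ codimension $\ge g+2$ over $B_\delta$.
\end{itemize}
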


Theorem \ref{thm2.6} is \cite[Theorem 0.2]{MSY}; we discuss its proof briefly as follows.

\begin{proof}[Ingredients in the proof of Theorem \ref{thm2.6}]
The relative Jacobian 
\[
J_C: = \{(C_b, L_b)|~~ b\in B, ~~ L_b \in \mathrm{Pic}^0(C_b) \}
\]
is a commutative group scheme over $B$ which acts on $\overline{J}_C$ via tensor product
\begin{equation}\label{JC}
\begin{tikzcd}
J_C \arrow[dr] 
\arrow[r, phantom, "\;\curvearrowright\;"] 
& \overline{J}_C \arrow[d, "f"] \\
& B.
\end{tikzcd}
\end{equation}
For every point $b \in B$, we define $\delta(b)$ to be the dimension of the maximal affine subgroup of~$J_{C_b}$; this yields a constructible function
\[
\delta: B \to \BZ_{\geq 0}.
\]
For every irreducible $Z \subset B$, we define $\delta_Z$ to be the minimal value of $\delta(b)$ with $b\in Z$; equivalently, $\delta_Z$ calculates $\delta(b)$ for very general $b \in Z$. Following Ng\^o \cite{Ngo}, we say that the abelian fibration given by (\ref{JC}) is \emph{${\delta}$-regular} if we have for every irreducible $Z \subset B$:
\[
\delta_Z \leq \mathrm{codim}_B(Z).
\]
We recall the following lemma (\emph{c.f.}~\cite[Lemma 4.1]{MS_chi}).

\begin{lem}\label{lem2.7}
If $\overline{J}_C$ is nonsingular, then \eqref{JC} is $\delta$-regular.
\end{lem}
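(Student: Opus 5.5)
The plan is a dimension count on a stratification of $B$ by $\delta$, comparing the fiber dimension of $\overline{J}_C\times_B \overline{J}_C$ with the expected dimension forced by nonsingularity.

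For the lower bound we use the group action. Fix irreducible $Z\subset B$ and a very general point $b\in Z$, so $\delta(b)=\delta_Z$. Since the curves are integral and locally planar, $\overline{J}_{C_b}$ is irreducible of dimension $g$, and $J_{C_b}$ acts on it with finite stabilizers on the open orbit of line bundles. We form the relative product $X:=\overline{J}_C\times_B \overline{J}_C$ restricted over $Z$. It has dimension at least $\dim Z + 2g$ from the generic fibers, and we want extra room coming from the affine part.

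The decisive input is deformation theory of the singular curves. For locally planar curves the versal deformation of $C_b$ is smooth, and the total space of the relative compactified Jacobian over the versal base is smooth. Consider the Kodaira–Spencer type map from $T_bB$ to the tangent space of the product of local versal deformation spaces of the singularities of $C_b$. Smoothness of $\overline{J}_C$ at points over $b$ forces this map to be ``transverse enough''. The standard result (Fantechi–Göttsche–van Straten, as used in \cite{MS_chi}) is that $\overline{J}_C$ is smooth along the fiber over $b$ if and only if the map $B\to \prod_p \mathrm{Def}(\widehat{\CO}_{C_b,p})$ is transverse to the equigeneric (i.e.\ $\delta$-constant) stratum there. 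The locus in $\prod_p\mathrm{Def}$ where the total $\delta$-invariant is at least $\delta_Z$ has codimension $\geq \delta_Z$; this is Teissier's / Diaz–Harris' result that the $\delta$-constant stratum of a planar singularity has codimension equal to $\delta$. Here the $\delta$-invariant of the curve, $\sum_p\delta_p$, coincides with the dimension of the affine part of $J_{C_b}$, namely $\delta(b)$.

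Transversality then pulls this codimension back to $B$. Since $Z$ lies in the preimage of the locus of $\delta\ge\delta_Z$, we obtain $\mathrm{codim}_B Z\geq \delta_Z$, which is exactly $\delta$-regularity.

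The main obstacle is the transversality step, namely deducing from the nonsingularity of $\overline{J}_C$ that $B$ maps submersively onto the versal deformation directions. The first approach is to identify the tangent space of $\overline{J}_C$ at a sheaf $F$ via the exact sequence $T_bB\to \mathrm{Ext}^1$-type obstruction data. Nonsingularity at a point where $F$ is maximally non-locally-free, for example $F$ the pushforward of the structure sheaf from the normalization at each node-like point, forces the composite $T_bB\to\bigoplus_p T\mathrm{Def}_p/T(\text{locally trivial})$ to have image of dimension at least $\sum_p \dim$ of the relevant quotient. This dimension is $\ge\delta_p$ by the local Fantechi–Göttsche–van Straten computation.
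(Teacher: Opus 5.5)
The paper gives no argument of its own here; it only quotes \cite[Lemma 4.1]{MS_chi}. Your route can work: map a neighbourhood of $b$ to $V=\prod_p\mathrm{Def}(\widehat{\CO}_{C_b,p})$ via $\phi$, turn smoothness of $\overline J_C$ into a transversality condition, and use the equigeneric stratum $V^{\delta}$ with $\delta=\delta(b)$. Your first two paragraphs (the count on $\overline J_C\times_B\overline J_C$ and the lower bound from the $J_C$-action) are never used and can be dropped.

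There is, however, a genuine gap exactly at the step you call the main obstacle. The stratum $V^{\delta}$ is singular at $0$. For a cusp, with versal family $y^2=x^3+sx+t$, it is the cuspidal curve $4s^3+27t^2=0$, whose Zariski tangent space is all of $T_0V$. So ``$\phi$ is transverse to $V^\delta$'' has no content until you name a linear subspace. Moreover, Teissier's count $\mathrm{codim}\,V^\delta=\delta$, combined with a rank bound for $d\phi_b$ modulo some subspace, does not bound $\mathrm{codim}_b\,\phi^{-1}(V^\delta)$. For that, the subspace controlled by smoothness must be the same one that contains the tangent cone of $V^\delta$. Your sketch does not supply this:
\begin{itemize}
\item Read literally, $T\mathrm{Def}_p/T(\text{locally trivial})$ is the whole Tjurina space $T^1_p$, since an isolated singularity has no nontrivial locally trivial deformations. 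Smoothness does not make $T_bB\to\bigoplus_pT^1_p$ surjective: for the cubics $y^2z=x^3+tz^3$ one has $\overline J_C\cong C$ smooth, yet $\dim T_bB=1<2=\dim T^1$.
\item Read as ``modulo the image of $d\pi_F$'', a bound $\ge\delta_p$ on the dimension of the quotient is still insufficient, for the reason above.
\end{itemize}

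The missing idea is a single linear subspace that does both jobs: $L=\bigoplus_p\mathfrak{c}_pT^1_p\subset T_0V$, where $\mathfrak{c}_p$ is the conductor. The Jacobian ideal lies in $\mathfrak{c}_p$, and $\dim\CO_{C_b,p}/\mathfrak{c}_p=\delta_p$ by Gorenstein duality, so $L$ has codimension $\delta$. Three steps then close the argument.
\begin{itemize}
\item[(i)] By Diaz--Harris, the reduced tangent cone of $V^\delta$ at $0$ is supported on $L$. This, not just the codimension count, is the input you need.
\item[(ii)] Take $F=\nu_*M$ with $M$ a line bundle of degree $-\delta$ on the normalization, so $F_p\cong\widetilde{\CO}_p$ at every singular point. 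Near $F$, $\overline J_C$ is the base change along $B\to\mathrm{Def}(C_b)$ of the smooth relative compactified Jacobian $\pi$ over the versal deformation (Fantechi--G\"ottsche--van Straten). A tangent-space count then shows that smoothness of $\overline J_C$ at $F$ is equivalent to $T_bB+\mathrm{im}(d\pi_F)=T\,\mathrm{Def}(C_b)$. Now lift a square presentation $A_p$ of $\widetilde{\CO}_p$ over $\CO_{\BC^2,p}$ with $\det A_p=f_p$. Then $\det(A_p+\epsilon A')=f_p+\epsilon\,\mathrm{tr}(\mathrm{adj}(A_p)A')$, so $\widetilde{\CO}_p$ extends only along $f_p+\epsilon h$ with $h$ in the ideal of entries of $\mathrm{adj}(A_p)$ modulo $f_p$. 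These entries are values of homomorphisms $\widetilde{\CO}_p\to\CO_{C_b,p}$, hence lie in $\mathfrak{c}_p$. Thus $\mathrm{im}(d\pi_F)$ maps into $L$. Since $T\,\mathrm{Def}(C_b)\to T_0V$ is onto, $d\phi_b\colon T_bB\to T_0V/L$ is onto.
\item[(iii)] Tangent cones are functorial, so $d\phi_b$ maps $C_b(\phi^{-1}V^\delta)$ into $C_0(V^\delta)\subseteq L$. Hence $\dim_b\phi^{-1}(V^\delta)\le\dim d\phi_b^{-1}(L)=\dim B-\delta$.
\end{itemize}
Finally, choose $b\in Z$ with $\delta(b)=\delta_Z$. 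By upper semicontinuity, $Z\subseteq\{\delta\ge\delta_Z\}=\phi^{-1}(V^{\delta_Z})$ near $b$, which gives $\mathrm{codim}_BZ\ge\delta_Z$.
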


In fact, Theorem \ref{thm2.6} is proven by applying Lemma \ref{lem2.7} four(!) times:

Axiom A is clear by setting $f^\vee = f$.

Axiom B follows from the support theorem of Ng\^o, where the $\delta$-regularity is a key assumption in applying \cite{Ngo} (the first time).

The Poincar\'e sheaf $\CP$ for Axiom C was constructed by Arinkin \cite{A2} using Haiman's isospectral Hilbert schemes, and the $\delta$-regularity ensures that the induced Fourier--Mukai transform $\mathrm{FM}_\CP$ is a derived equivalence admitting an inverse $\mathrm{FM}_{\CP^{-1}}$ (the second time).

Finally, Axioms C+ and D are established by arguments similar to those in Arinkin’s proof of Axiom C \cite{A2} (see \cite[Sections 3.3 and 3.5]{MSY}); $\delta$-regularity is used in verifying both (the third and the forth times).
\end{proof}

We now discuss two interesting cases: the universal fine compactified Jacobian associated with a nondegenerate stability condition, and the parabolic Hitchin system associated with a general residue.  In the former case, we require the fact that Theorem \ref{thm2.6}, which concerns integral curves, can be further generalized to \emph{reduced} locally planar curves using \cite{MSY, MRV2}. 

\medskip

\noindent{\bf Universal fine compactified Jacobians.} By the discussion above, a universal fine compactified Jacobian over the Deligne--Mumford moduli space of stable curves $\overline{\CM}_{g,n}$ with respect to a nondegenerate stability condition in the sense of Kass--Pagani \cite{KP} is a dualizable abelian fibration.

More precisely, since a stable curve in $\overline{\CM}_{g,n}$ need not be irreducible, obtaining a separated, proper, and nonsingular universal compactified Jacobian requires the choice of a \emph{nondegenerate stability condition}. We use $\overline{J}^{\phi}_{g,n}$ to denote the universal fine compactified Jacobian, whose geometry relies heavily on the nondegenerate stability condition $\phi$.\footnote{In Theorem \ref{thm2.2}, all the spaces $M_1,M_2,M^\vee$ are given by universal fine compactified Jacobians.} 

The following theorem is obtained in \cite[Theorem 2.13]{MSY}.

\begin{thm}[\cite{BMSY2}] \label{thm2.8}
Assume $n\geq 1$. Let $\phi$ be a nondegenerate stability condition for $\overline{\CM}_{g,n}$, then $f^\phi: \overline{J}^\phi_{g,n} \to \overline{\CM}_{g,n}$ is a dualizable abelian fibration whose dual can be chosen as given by any nondegenerate stability condition $\phi'$:
\[
\begin{tikzcd}
\overline{J}^\phi_{g,n} \arrow[dr, "f^\phi"'] & & \overline{J}^{\phi'}_{g,n} \arrow[dl, "f^\vee = f^{\phi'}"] \\
& B &.
\end{tikzcd}
\]
\end{thm}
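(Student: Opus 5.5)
We verify the axioms one by one, treating $\overline{J}^\phi_{g,n}\to\overline{\CM}_{g,n}$ as a compactified Jacobian fibration of a family of \emph{reduced} nodal (hence locally planar) curves. Throughout we use the generalization of Theorem \ref{thm2.6} from integral to reduced locally planar curves mentioned above. Since $\overline{\CM}_{g,n}$ is a smooth DM stack, we would first pass to an étale (or smooth) atlas $B\to\overline{\CM}_{g,n}$ of the stack together with the pulled-back fibrations. All the axioms are local on $B$ and compatible with étale base change, so it suffices to check them there.

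\textbf{Axioms A and B.} Nondegeneracy of $\phi$ and $\phi'$ gives that $\overline{J}^\phi_{g,n}$ and $\overline{J}^{\phi'}_{g,n}$ are nonsingular and proper over $\overline{\CM}_{g,n}$. Over the locus $U=\mathcal{M}_{g,n}$ of smooth curves both equal the relative Jacobian $J_C$, which is self-dual via the principal polarization. The marked point ($n\ge1$) supplies a section, so we get honest abelian schemes rather than torsors. This gives Axiom A. For Axiom B, we have the action of the generalized relative Jacobian $J_C$ (multidegree-zero line bundles) on each $\overline{J}^{\phi}$. We would prove $\delta$-regularity as in Lemma \ref{lem2.7}, using nonsingularity of the total space. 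Concretely, $\delta(b)$ is the first Betti number of the dual graph of $C_b$, and the locus with $\delta\ge k$ has codimension at least $k$ because it is contained in the locus of curves with at least $k$ nodes. Ngô's support theorem then gives full support for both fibrations.

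\textbf{Axioms C, C+ and D.} This is the main step. For reduced locally planar curves, Arinkin's construction generalizes (following \cite{MRV2}): the Poincaré sheaf $\CP$ on $\overline{J}^{\phi'}\times_B\overline{J}^\phi$ is obtained by extending the Poincaré bundle from the locus where one factor is a line bundle, via the $\delta$-regularity/codimension estimates. We then need that $\mathrm{FM}_\CP$ is an equivalence for \emph{any} pair of nondegenerate $\phi,\phi'$. The key point is that Arinkin's argument only uses the local structure, namely the Hilbert schemes of the planar singularities and the $\delta$-stratification, together with fine-ness of both moduli spaces. It does not use $\phi=\phi'$.

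Axiom C+ and Axiom D follow by repeating the support-dimension estimates of \cite[Sections 3.3 and 3.5]{MSY} over each $\delta$-stratum. Over a stratum $Z$, the support of $\CP^{-1}\circ\CP^{\otimes N}$ (respectively of $\CK$) has fiber dimension bounded by $g+\delta_Z$ beyond the generic graph. Since $\delta_Z\le\mathrm{codim}Z$, no component of codimension less than $g$ arises, and the closure of the graph is the only component of pure codimension $g$.

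\textbf{Main obstacle.} The hardest part is making the Poincaré sheaf and its inverse work between \emph{different} stability conditions $\phi\neq\phi'$ on reducible fibers. Here the compactified Jacobians are not the same, and the relevant objects are not obviously Cohen--Macaulay. I would first reduce to local analytic charts of a versal deformation of a nodal curve. In such a chart, the fine compactified Jacobians for different stabilities are related by explicit wall-crossing flips, and I would check the Cohen--Macaulay property of $\CP$ and the codimension bounds there, as in \cite{MRV2}.
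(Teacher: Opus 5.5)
Your route is the paper's: Theorem \ref{thm2.8} comes from running the proof of Theorem \ref{thm2.6} for reduced locally planar curves. In that argument \cite{MRV2} supplies the Poincar\'e sheaf between \emph{any} two fine compactified Jacobians, so your wall-crossing plan is unnecessary, and the estimates of \cite[Sections 3.3 and 3.5]{MSY} give Axioms C+ and D.

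However, the last clause of Axiom D does not follow from your estimate. An excess fiber dimension of at most $\delta_Z$ over each stratum, combined with $\delta_Z\le\mathrm{codim}\,Z$, only shows that every component of $\mathrm{Supp}\,\CK$ has codimension $\geq g$. To conclude that $\overline{\Gamma}_{\mathrm{mult}}$ is the \emph{only} component of pure codimension $g$, you need strict inequality, and it fails. For a nodal curve with dual graph $\Gamma$ one has $\delta=|E(\Gamma)|-|V(\Gamma)|+1$, while its stratum has codimension $|E(\Gamma)|$. So $\delta_Z=\mathrm{codim}\,Z$ exactly when the generic curve over $Z$ is irreducible; this already happens on $\Delta_{\mathrm{irr}}$, where both equal $1$. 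Over such $Z$, the bound $2g+\delta_Z$ on the fibers of $\mathrm{Supp}\,\CK$ allows a component of dimension $\dim B+2g$, i.e.\ of codimension exactly $g$. Such a component cannot lie in $\overline{\Gamma}_{\mathrm{mult}}$, whose part over $Z$ has smaller dimension. The strict inequality does dispose of all strata of reducible curves. On the strata of irreducible nodal curves, though, you need an additional argument that the excess support has strictly smaller dimension. For instance, you could identify $\CK$, on the locus where one input is a line bundle, with the structure sheaf of the graph of the $J_C$-action (which lies in $\overline{\Gamma}_{\mathrm{mult}}$), and bound the remaining locus separately. This does not follow from $\delta$-regularity alone.

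There is also a smaller omission in Axiom B. For a $\delta$-regular weak abelian fibration, Ng\^o's theorem says only that every support $Z$ satisfies $\mathrm{codim}\,Z=\delta_Z$ and contributes a direct summand of $R^{2g}f_*\BQ$ near the generic point of $Z$. In Theorem \ref{thm2.6} this sheaf is $\BQ_B$ because the fibers are irreducible. Here, fine compactified Jacobians of reducible nodal curves are reducible (their components are counted by spanning trees of the dual graph), so full support is not automatic. The same count repairs this: $\mathrm{codim}\,Z=\delta_Z$ forces the generic curve over $Z$ to be irreducible. Over the open locus of irreducible curves, $R^{2g}f_*\BQ$ is constant of rank one, so it has no direct summand supported on $Z$.
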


We note from the theorem that the dual fibration is not unique for a given abelian fibration; the flexibility of the choices of a dual fibration plays a crucial role in \cite{BMSY2}; see also Theorem \ref{thm3.7} below.

\medskip

\noindent{\bf Parabolic Hitchin systems.} A geometric object of particular interest to us is the Hitchin system associated with a curve $\Sigma$ (of genus $\geq 2$) and coprime rank $n$ and degree $d$. However, the Hitchin system is \emph{not} a dualizable abelian fibration: the full support property (Axiom B) fails \cite{dCHeM}, and it is unclear how to construct the Fourier–Mukai duality required by Axioms~C, C+, and D. In contrast, as we will discuss below, the parabolic Hitchin system associated with the same curve $\Sigma$, rank $n$, degree $d$, and a general residue is indeed a dualizable abelian fibration \cite{MSY2}. Some questions and conjectures for the usual Hitchin system can be reduced to the parabolic case \cite{HMMS, MSY2}.

Fix a point $p\in \Sigma$. Denote by ${M}^{\mathrm{par}}$ the moduli space of stable parabolic Higgs bundles~$(\CE, \theta, F_p)$, where $(\CE,\theta)$ is a meromorphic Higgs bundle
\[
(\CE, \theta), \quad \theta: \CE \to \CE\otimes \omega_\Sigma(p), \quad \mathrm{rk}(\CE) = n, \quad \mathrm{deg}(\CE) = d,
\]
and $F_p$ is a complete flag of $\CE$ over the point $p$ such that the residue $\mathrm{res}_p(\theta)$ of the Higgs field over the point $p$ preserves the flag $F_p$. Let $M^\eta_{n,d}$ be the moduli space of stable parabolic Higgs bundles of rank $n$ and degree $d$ with respect to a general residue $\eta$ at the point $p\in \Sigma$, with 
\[
f_d: M^\eta_{n,d} \to B
\]
the parabolic Hitchin system.

\begin{thm}[\emph{c.f.}~\cite{MSY2}]\label{thm2.9}
The parabolic Hitchin system $f_{d}: M^\eta_{n,d} \to B$ is a dualizable abelian fibration whose dual is given by the parabolic Hitchin system $f_{d'}: M^\eta_{n,d'} \to B$ with respect to the same residue $\eta$ and a degree $d'$ coprime to $n$,
\[
\begin{tikzcd}
M^\eta_{n,d} \arrow[dr, "f_d"'] & & M^\eta_{n,d'} \arrow[dl, "f^\vee = f_{d'}"] \\
& B &.
\end{tikzcd}
\]
\end{thm}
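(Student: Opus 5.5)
My plan is to reduce Theorem \ref{thm2.9} to the compactified Jacobian case, Theorem \ref{thm2.6}, by way of the spectral correspondence. I will also check each axiom separately.

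\textbf{Spectral description.} For a general residue $\eta$, the BNR correspondence identifies $M^\eta_{n,d}$ with a moduli space of rank one torsion-free sheaves on the fibers of a family of spectral curves $C \to B$. These spectral curves lie in the nonsingular surface $S$ obtained by blowing up $\mathrm{Tot}(\omega_\Sigma(p))$ at the $n$ points of the fiber over $p$ that are determined by the eigenvalues of $\eta$. Because $\eta$ is general (regular semisimple with generic eigenvalues), every spectral curve is \emph{integral}: a decomposition $C_b = C' \cup C''$ would force a proper subset of the eigenvalues of $\eta$ to sum to an integer-type degree constraint, and genericity rules this out. Since the curves lie on a surface, they are also locally planar. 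This makes $f_d : M^\eta_{n,d} \to B$ a relative compactified Jacobian $\overline{J}^{\,e}_C \to B$ of some degree $e$ determined by $d$.

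The total space is nonsingular, because it is a holomorphic symplectic moduli space of stable objects on $S$. Lemma \ref{lem2.7} then gives $\delta$-regularity of the relative Jacobian action. Running the argument of Theorem \ref{thm2.6} in degree $e$ rather than degree $0$ gives the following.
\begin{enumerate}
\item[(B)] Axiom B holds by Ng\^o's support theorem.
\item[(C)] Axiom C holds by Arinkin's Poincar\'e sheaf, using the version that pairs $\overline{J}^{\,e}$ with $\overline{J}^{\,e'}$ (twisting by the section-free normalization).
\item[(C+, D)] Axioms C+ and D hold by the arguments of \cite[Sections 3.3, 3.5]{MSY}, which depend only on $\delta$-regularity and local planarity.
\end{enumerate}

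\textbf{Axiom A and the torsor issue.} Over the locus $U$ of smooth spectral curves, $M^\eta_{n,d}$ is a torsor under $\mathrm{Pic}^0(C_U)$, not an abelian scheme. So Axiom A has to be read in the torsor-adapted form of Section \ref{sec3.4}: the dual of $\mathrm{Pic}^e$ is $\mathrm{Pic}^0$, which is realized by any $\overline{J}^{\,e'}$. The fact that every degree $d'$ coprime to $n$ works should come from two observations. First, the Fourier--Mukai kernel only needs to match the Poincar\'e bundle up to translation on torsors. Second, Arinkin's kernel is defined between compactified Jacobians of arbitrary degrees after a choice of twist.

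\textbf{Main obstacle.} The genuinely hard step is making the Poincar\'e kernel work without a section. Arinkin's construction and the MSY verification of C+ and D use normalization along a section, or at least a degree-zero identification, and the spectral family has no global section. I would first try to work \'etale locally on $B$, where sections exist, build $\CP$ there, and then glue. The gluing requires showing that the ambiguities (twists by line bundles pulled back from the base, and translations) are canonical, or at least that they do not affect the support statements in Axioms C+ and D, since those statements are local on $B$. The other point needing care is the coprimality hypothesis. It is what guarantees fine moduli, and hence the existence of universal sheaves on $M^\eta_{n,d'}$ that enter the kernel, so I would keep track of it explicitly at that step.
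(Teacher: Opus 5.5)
Your overall route is the right one: for general $\eta$ the spectral curves are integral and locally planar, $M^\eta_{n,d}$ is a nonsingular relative compactified Jacobian, and the ingredients of Theorem~\ref{thm2.6} apply. As written, however, the proposal does not prove Axiom~C. You assert a Poincar\'e kernel pairing $\overline{J}^{\,e}_C$ with $\overline{J}^{\,e'}_C$, then concede you cannot build it globally, and propose constructing it \'etale locally on $B$ and gluing. That does not close the gap. Axiom~C asks for a global $\CP$ with a global inverse, and all of Section~\ref{sec3} uses the global cycles $\mathfrak{F}=\ch(\CP)$. Local kernels normalized by different local sections differ on overlaps by twists $p^*Q\otimes q^*Q'$ by line bundles on the two factors, and these need not glue: the obstruction is of Brauer type. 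That the support conditions in Axioms~C+ and~D are local on $B$ does not remove it; a genuinely section-free situation would force twisted sheaves and twisted Chern characters. A smaller correction: integrality comes from the residue theorem. For a $\theta$-invariant subsheaf $\CF$, $\mathrm{tr}(\theta|_{\CF})\in H^0(\Sigma,\omega_\Sigma(p))$ has zero residue at $p$, so a proper subset of the $\eta_i$ would have to sum to $0$. An ``integer-type degree constraint'' belongs to the de Rham side.

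The missing observation is that your premise is false here: the spectral family has sections. Each $C_b$ meets the fiber of $\mathrm{Tot}(\omega_\Sigma(p))$ over $p$ exactly in the eigenvalues of $\mathrm{res}_p\theta=\eta$. Hence $b\mapsto(p,\eta_i)$, $i=1,\dots,n$, are sections $\sigma_i$ of $C\to B$. They land in the smooth locus, because the $\eta_i$ are distinct and so $C_b\to\Sigma$ is \'etale over $p$. Tensoring with powers of $\CO_C(\sigma_1)$ gives isomorphisms $M^\eta_{n,d}\cong\overline{J}^{\,0}_C\cong M^\eta_{n,d'}$ over $B$. Theorem~\ref{thm2.6} then yields all the axioms, with dual $M^\eta_{n,d'}$. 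The torsor caveat stated after the theorem is harmless in this setting: it only reflects that the zero section is a choice. On the Higgs side, the same fact says that the first step $F_1\subset\CE_p$ of the flag (the $\eta_1$-eigenline, i.e.\ the fiber of the spectral sheaf at $\sigma_1(b)$) is a weight-one line normalizing the universal family in every degree. So fineness does not come from coprimality; for general $\eta$ every object is stable in every degree. Coprimality of $d,d'$ plays no role in this argument; it is there for the application to $\mathrm{Higgs}_{n,d}$ in Theorem~\ref{thm3.5}.
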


We note that Theorem \ref{thm2.9}, as stated here, is slightly inaccurate and requires modification. The issue is that the Hitchin system $f_d: M^\eta_{n,d} \to B$ does not have nonsingular fibers forming an abelian scheme; rather, they form a torsor under an abelian scheme. Nevertheless, these fibrations can still be treated within the framework of dualizable abelian fibrations with minor adjustments. We refer to \cite[Section 4]{MSY} and \cite[Section 2]{MSY2} for a more precise discussion.

\medskip

\noindent {\bf Further candidates?} We list two classes of potential candidates for dualizable abelian fibrations.

\begin{enumerate}
    \item[(a)] The work of Arinkin--Fedorov \cite{AF} constructed partial Fourier--Mukai transforms for Lagrangian fibrations with integral fibers; it is expected that the Arinkin--Fedorov construction eventually leads to a full Fourier--Mukai duality.
    \item[(b)]Recently, Bae--Molcho--Pixton \cite{BMP} studied a partial toroidal compactification of the universal principally polarized abelian varieties $\CX_g \to \CA_g$, and constructed a Fourier--Mukai (self-)duality; see \cite[Theorem 1.5]{BMP}. It is expected that this gives another example of a (self-)dualizable abelian fibration.
\end{enumerate}

\section{Dualizable abelian fibrations: results and applications}\label{sec3}

\subsection{Decomposition theorem}
Since our main goal is to understand the decomposition theorem \cite{BBD} for dualizable abelian fibrations, we begin with a brief discussion of the relevant general terminology.

Let $f: X \to Y$ be a proper and flat morphism between nonsingular varieties. Assume that each fiber is of dimension $g$. The decomposition theorem reads
\begin{equation}\label{DDTT}
Rf_*\BQ_{X} \simeq \bigoplus_{i = 0}^{2g}\CH_{(i)}, \quad \CH_{(i)} := {^\Fp}H^{i + \dim Y}(Rf_{*}\BQ_X)[-i - \dim Y] \in D^b_c(Y).
\end{equation}
The decomposition theorem is not canonical; what remain canonical are the perverse truncation functors ${^\Fp}\tau_{\leq \bullet}(-), {^\Fp}\tau_{\geq \bullet}(-)$ applied to the pushforward complex $Rf_{*}\BQ_X$. Setting
\[
P_iH^*(X, \BQ) := \mathrm{Im}\left(H^*(Y, {^\Fp}\tau_{\leq i + \dim Y}Rf_{*}\BQ_{X}) \to H^*(Y, Rf_{*}\BQ_{X}) = H^*(X, \BQ)\right),
\]
we obtain the perverse filtration
\[
P_0 H^*(X, \BQ) \subset P_1H^*(X, \BQ) \subset \cdots \subset P_{2g}H^*(X, \BQ) = H^*(X, \BQ)
\]
and the canonical identification
\[
\mathrm{Gr}_i^PH^*(X, \BQ) = H^*(B, \CH_{(i)}).
\]

The cup-product on $X$ yields the cup-product on the pushforward complex, 
\begin{equation}\label{cup}
\cup: Rf_* \BQ_X \otimes Rf_* \BQ_X \to Rf_* \BQ_X.
\end{equation}
We say that $f: X\to Y$ admits a \emph{multiplicative decomposition theorem} if there exists an isomorphism (\ref{DDTT}) which interacts with the cup-product (\ref{cup}) as
\[
\cup: \CH_{(i)} \otimes \CH_{(j)} \to \CH_{(i+j)}.
\]
By Theorem \ref{thm1.2}, an abelian scheme admits a multiplicative decomposition theorem; this was generalized to any smooth abelian fibration in \cite[Appendix A]{BMSY2}. In contrast, Theorem \ref{thm2.1} shows that compactified Jacobian fibrations $f: \overline{J}_C \to B$ associated with nodal curves may not admit a multiplicative decomposition theorem. 

In general, multiplicative decomposition theorems are rare. For example, most projective bundles $\BP(V) \to Y$ associated with a vector bundle $V\to Y$ do not admit a multiplicative decomposition theorem; this is because the Leray filtration (which is equivalent to perverse filtration in this case) does not admit a multiplicative splitting for most projective bundles~$\BP(V)$.

A weaker condition is the \emph{multiplicativity of the perverse truncation functors}, \emph{i.e.}, 
\begin{equation}\label{filtration_mult}
\cup: {^\Fp}\tau_{\leq i + \dim Y}(Rf_*\BQ_{X})  \otimes {^\Fp}\tau_{\leq j + \dim Y}(Rf_*\BQ_{X}) \to {^\Fp}\tau_{\leq i + j + \dim Y}(Rf_*\BQ_{X}), \quad i,j \in \BZ.
\end{equation}
Of course, if $f: X\to Y$ admits a multiplicative decomposition theorem, it has multiplicative perverse truncation functors; but the latter is much weaker. For example, any smooth and proper map $f: X\to Y$ has multiplicative perverse truncation functors. But a general proper flat map between nonsingular varieties may fail to have multiplicative perverse truncation functors; see \cite[Exercise 5.6.8]{Park} and the discussion in \cite{dC_Survey}.

If (\ref{filtration_mult}) holds for $f: X\to Y$, then the cup-product still yields a cup-product on the summands:
\[
\overline{\cup}: \CH_{(i)} \otimes \CH_{(j)} \to \CH_{(i+j)},
\]
which we call the \emph{induced cup-product}. In general, if $f: X\to Y$ does not admit a multiplicative decomposition theorem, the algebra object
\[
(\bigoplus_{i=0}^{2g} \CH_{(i)}, \overline{\cup})
\]
does not recover the algebra object
\[
(Rf_* \BQ_X, \cup).
\]
The first algebra object is a degeneration of the second algebra object.

Finally, we say that $f: X \to Y$ admits a \emph{motivic decomposition}, if there exists an isomorphism of relative Chow motives
\[
h(X) = \bigoplus_{i=0}^{2g} h_i(X), \quad h(X) = (X, [\Delta_{X/Y}], 0), \,\, h_i(X) = (X, \mathfrak{q}_i, 0)
\]
whose sheaf-theoretic specialization recovers an isomorphism (\ref{DDTT}). In other words, there exists an orthogonal decomposition of the relative diagonal class
\[
[\Delta_{X/Y}] = \sum_{i=0}^{2g} \mathfrak{q}_i \in \mathrm{CH}_{\dim X} (X\times_Y X)
\]
whose associated (self-)correspondences on $Rf_* \BQ_X$ recover a decomposition (\ref{DDTT}). The motivic decomposition conjecture of Corti--Hanamura \cite{CH} predicts that a motivic decomposition always exists.

\begin{conj}[Motivic decomposition conjecture \cite{CH}]\label{conj3.1}
Any proper $f: X\to Y$ between nonsingular varieties admits a motivic decomposition.
\end{conj}

Theorem \ref{thm1.2} shows that the conjecture holds for abelian schemes with the summands $h_i(X)$ constructed from the Fourier transforms.

\subsection{Main theorem}
We discuss geometric consequences for an abelian fibration to be dualizable in the sense of Section \ref{sec2.2}.

The following theorem combines results in \cite{MSY} and \cite{BMSY2}.

\begin{thm}[\cite{MSY, BMSY2}]\label{thm3.2} 
Let $f: M \to B$ be a dualizable abelian fibration of relative dimension~$g$. Then the following hold.
\begin{enumerate}
    \item[(a)] The morphism $f: M \to B$ admits a motivic decomposition, \emph{i.e.}, Conjecture \ref{conj3.1} holds for $f: M\to B$.
    \item[(b)] The morphism $f: M \to B$ has multiplicative perverse truncation functors.
    \item[(c)] Let $f^\vee: M^\vee \to B$ be a dual to $f: M \to B$. Then the class \[[\overline{\Gamma}_{\mathrm{mult}}] \in \mathrm{CH}_{\dim B +2g}(M^\vee \times_B M^\vee \times_B M^\vee)
    \]
    induces a convolution product 
    \[
\ast = [\overline{\Gamma}_{\mathrm{mult}}]_*: \CH^\vee_{(2g-i)}[2g - 2i] \otimes \CH^\vee_{(2g-j)}[2g - 2j] \to  \CH^\vee_{(2g-i-j)}[2g - 2i - 2j],
    \]
    and there is an isomorphism of algebra objects
    \[
    \left(\bigoplus_{i=0}^{2g} \CH_{(i)}, \overline{\cup}\right) \simeq  \left(\bigoplus_{i=0}^{2g} \CH^\vee_{(2g-i)}[2g - 2i], \ast \right).
    \]
    Here $\CH_{(i)}, \CH^\vee_{(i)}$ are defined as in \eqref{DDTT} for $f, f^\vee$ respectively.
\end{enumerate}
\end{thm}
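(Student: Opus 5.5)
The plan is to mimic the abelian-scheme argument of Section~\ref{sec1}. We replace the ``multiplication by $N$'' maps by Axiom~C+ and the group law by Axiom~D. The support statements are turned into cohomological-degree statements using Axiom~B, which says every summand is an intermediate extension from $U$.

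\textbf{Step 1: Fourier transforms.} Set
\[
\mathfrak{F}=\mathrm{td}\text{-corrected }\ch(\CP), \qquad \mathfrak{F}^{-1}\ \text{similarly from }\CP^{-1},
\]
using Chern characters of coherent complexes on the (possibly singular) relative products. These are defined via bivariant or localized classes, which is legitimate since $M,M^\vee$ are nonsingular and we only need correspondences relative to $B$. By Axiom~C and Grothendieck--Riemann--Roch for compositions of kernels, we get
\[
\mathfrak{F}\circ\mathfrak{F}^{-1}=[\Delta_{M/B}], \qquad \mathfrak{F}^{-1}\circ\mathfrak{F}=[\Delta_{M^\vee/B}],
\]
exactly as in \eqref{diag}.

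\textbf{Step 2: Fourier vanishing.} The key step is the analogue of Proposition~\ref{prop1.1}: $\mathfrak{F}^{-1}_i\circ\mathfrak{F}_j=0$ for $i+j\neq 2g$, at least after passing to the sheaf-theoretic action on $Rf_*\BQ$.

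Over $U$, the relation $\CP^{\otimes N}=([N]\times\mathrm{id})^*\CP$ gives $\ch(\CP^{\otimes N})=\sum_j N^j\mathfrak{F}_j$. Hence
\[
\ch\bigl(\CP^{-1}\circ\CP^{\otimes N}\bigr)=\sum_{i,j}N^j\,\mathfrak{F}^{-1}_i\circ\mathfrak{F}_j .
\]
By Axiom~C+, the left-hand side is supported in codimension $g$ for infinitely many $N$, so its Chern character has no components of degree $<g$. Vandermonde in $N$ then separates the $N^j$-components. This gives $\mathfrak{F}^{-1}_i\circ\mathfrak{F}_j=0$ for $i+j<2g$ as cycles, and for $i+j>2g$ the pieces are classes supported over proper closed subsets of $B$.

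Axiom~B then kills their action on $Rf_*\BQ$. A correspondence supported over $Z\subsetneq B$ induces a map between intermediate extensions that factors through something supported on $Z$, and such a map is zero when all summands have full support. I expect this step to be the main obstacle. The degree bookkeeping must be set up so that the ``bad'' components land in degrees where Axiom~B applies, while the good ones are pinned down over $U$.

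\textbf{Step 3: parts (a) and (b).} Define
\[
\mathfrak{q}_i=\mathfrak{F}_i\circ\mathfrak{F}^{-1}_{2g-i},
\]
as in Theorem~\ref{thm1.2}. By Step~1 they sum to the diagonal, and by Step~2 they are orthogonal idempotents, which gives (a). Their specialization agrees with the perverse decomposition because over $U$ it recovers the Beauville/Deninger--Murre decomposition of Theorem~\ref{thm1.2}, and Axiom~B propagates this identification to all of $B$. This gives (a) as a refinement of \eqref{DDTT}, and in particular the images $\mathrm{Im}(\mathfrak{F}_i)$ realize $\CH_{(i)}$.

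For (b), write the cup product via the small diagonal. Transporting it by $\mathfrak{F}^{\pm1}$ yields $\ch(\CK)$. Axiom~D says $\CK$ is supported in codimension $g$ with $\overline{\Gamma}_{\mathrm{mult}}$ the only top-dimensional component. Thus the degree-$g$ part of $\ch(\CK)$ is $[\overline\Gamma_{\mathrm{mult}}]$, the lower-degree parts vanish, and the higher-degree parts shift perverse degree in one direction only. This yields
\[
\mathfrak{q}_k\circ\cup\circ(\mathfrak{q}_i\otimes\mathfrak{q}_j)=0 \quad \text{for } k>i+j,
\]
which is exactly the multiplicativity of the truncations \eqref{filtration_mult}.

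\textbf{Step 4: part (c).} The same computation shows that on associated graded pieces only $[\overline{\Gamma}_{\mathrm{mult}}]$ contributes. The induced product $\overline{\cup}$ therefore corresponds under $\mathfrak{F}^{-1}_{2g-i}$ to $[\overline\Gamma_{\mathrm{mult}}]_*$ on $\CH^\vee_{(2g-i)}[2g-2i]$. This gives the claimed isomorphism of algebra objects, an analogue of Theorem~\ref{thm1.3_3}.

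Again, the identification is checked over $U$, where it is Theorem~\ref{thm1.3_3}. It then extends by Axiom~B, since maps between intermediate extensions of local systems are determined by their restriction to $U$.
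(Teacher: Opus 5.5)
You have a genuine gap in Steps 2--3. Axiom C+ only yields the \emph{half} Fourier vanishing $\mathfrak{F}^{-1}_i\circ\mathfrak{F}_j=0$ and $\mathfrak{F}_i\circ\mathfrak{F}^{-1}_j=0$ for $i+j<2g$ (Proposition~\ref{prop3.4}), and that is all the paper proves or uses. Your treatment of the range $i+j>2g$ via Axiom B does not work. Such a class has codimension $i+j-g>g$, so it acts by a morphism $Rf^\vee_*\BQ_{M^\vee}\to Rf^\vee_*\BQ_{M^\vee}[2(i+j-2g)]$ of \emph{positive} degree. Full support only guarantees that degree-zero maps between IC complexes, and complexes up to isomorphism, are determined on $U$. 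Positive-degree maps between full-support IC complexes can be nonzero while coming from a proper closed subset: the relative diagonal restricted over a divisor $D\subset B$ is supported over $D$, yet it acts by cup product with the pullback of $[D]$. Moreover, even a sheaf-theoretic vanishing would not make the $\mathfrak{q}_i$ orthogonal idempotents in Chow, which is what (a) asks for.

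What half vanishing together with $\sum_i\mathfrak{q}_i=[\Delta_{M/B}]$ (the codimension-$g$ part of \eqref{diag2}) actually gives is only triangularity: $\mathfrak{q}_i\circ\mathfrak{q}_j=0$ for $j<i$. Equivalently, the $\mathfrak{p}_k=\sum_{i\le k}\mathfrak{q}_i$ are idempotents with $\mathfrak{p}_k\circ\mathfrak{p}_l=\mathfrak{p}_l$ for $l\le k$, i.e.\ they define a filtration of $h(M)$ by sub-motives. Two steps are missing. First, you must split this filtration into genuinely orthogonal projectors; this is formal in the pseudo-abelian category of relative Chow motives, and the paper cites \cite[Section 2.5.5]{MSY}. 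Second, only then apply Axiom B, and at the level of \emph{objects}. Each summand is a direct summand of $Rf_*\BQ_M$, hence a sum of shifted full-support IC complexes, hence determined up to isomorphism by its restriction to $U$, where it is $R^if_*\BQ[-i]$. This is what shows that $\mathfrak{p}_k$ realizes ${^\Fp}\tau_{\le k+\dim B}$. Your argument for (b) otherwise matches the paper's reduction to $([\Delta_{M/B}]-\mathfrak{p}_{k+l})\circ[\Delta^{\mathrm{sm}}_{M/B}]\circ(\mathfrak{p}_k\times\mathfrak{p}_l)=0$ via the codimension part of Axiom D, but it depends on this identification.

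Two further points. In Step 2, the identity $\ch(\CP^{\otimes N})=\sum_jN^j\mathfrak{F}_j$ is justified only over $U$, yet you use it over all of $B$ to conclude vanishing ``as cycles''. Off $U$, $\CP$ is not a line bundle, and there is no reason for the Chern character of its tensor powers on the singular $M^\vee\times_BM$ to be $\sum_jN^j\ch_j(\CP)$. The discrepancy is a class supported over $B\setminus U$. Such classes can have codimension $<g$ in $M^\vee\times_BM^\vee$, since the preimage of a divisor already has codimension $1$. So Axiom C+ gives nothing from an identity valid only over $U$. This is exactly where the paper brings in Adams operations, for which $\ch_j(\psi^N\alpha)=N^j\ch_j(\alpha)$ holds globally.

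In Step 4, the appeal to ``maps between intermediate extensions are determined on $U$'' does not apply. The source $\CH_{(i)}\otimes\CH_{(j)}$ of $\overline{\cup}$ is not an intermediate extension, and maps out of it are not detected on $U$ in general. Part (c) should instead come from the cycle-level fact that the codimension-$g$ part of $\ch(\CK)$ is $[\overline{\Gamma}_{\mathrm{mult}}]$, inserted between the projectors; this is where the second half of Axiom D enters. That is the content of the first sentence of your Step 4, and it suffices without any restriction to $U$.
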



Theorem \ref{thm3.2} partially generalizes structures of abelian schemes to dualizable abelian fibrations. The following are immediate consequences of Theorem \ref{thm3.2} in cohomology.

\begin{thm}\label{thm3.3}
   For a dualizable abelian fibration $f: M \to B$, we have the following.
   \begin{enumerate}
       \item[(a)] The perverse filtration $P_\bullet H^*(M, \BQ)$ is multiplicative, \emph{i.e.},
       \[
      \cup: P_iH^*(M, \BQ) \times P_{j}H^*(M, \BQ) \to P_{i+j}H^*(M, \BQ).
       \]
       In particular, the associated graded is a bigraded algebra over $H^*(B, \BQ)$ with the induced cup-product:
       \begin{equation}\label{intrinsic}
\left(       \bigoplus_{i,d} \mathrm{Gr}^P_i H^d(M, \BQ),  \overline{\cup} \right).
       \end{equation}
       \item[(b)] The bigraded $H^*(B, \BQ)$-algebra \eqref{intrinsic} is isomorphic to the bigraded $H^*(B, \BQ)$-algebra 
       \[
       \left(\bigoplus_{i,d} \mathrm{Gr}^P_{2g-i}H^{d+2g-2i}(M^\vee, \BQ), ~~~\ast \right)
       \]
       where the convolution product is induced by the class $[\overline{\Gamma}_{\mathrm{mult}}]$.
   \end{enumerate}
\end{thm}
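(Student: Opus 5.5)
Theorem \ref{thm3.3} follows from Theorem \ref{thm3.2}(b) and (c) by taking hypercohomology $H^*(B,-)$ of the sheaf-theoretic statements.

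For (a), I start from the multiplicativity \eqref{filtration_mult} of the perverse truncation functors, i.e. Theorem \ref{thm3.2}(b). Applying $H^*(B,-)$ gives a commutative square. Its top row is
\[
H^*(B, {^\Fp}\tau_{\leq i+\dim B}Rf_*\BQ_M)\otimes H^*(B, {^\Fp}\tau_{\leq j+\dim B}Rf_*\BQ_M) \to H^*(B, {^\Fp}\tau_{\leq i+j+\dim B}Rf_*\BQ_M).
\]
Its vertical maps are the natural maps to $H^*(M,\BQ)$. Its bottom row is the cup product, which is the hypercohomology of \eqref{cup} composed with the Künneth-type map $H^*(B,K)\otimes H^*(B,L)\to H^*(B,K\otimes L)$. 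Commutativity is the naturality of the cup product on $B$ with respect to the truncation maps ${^\Fp}\tau_{\leq \bullet}\to \mathrm{id}$. Taking images of the vertical arrows gives $P_i\cup P_j\subset P_{i+j}$. The product is $H^*(B,\BQ)$-linear because the $H^*(B)$-action is cup product with $f^*$ of classes in $P_0$.

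Passing to graded pieces, I use $\mathrm{Gr}^P_iH^*(M,\BQ)=H^*(B,\CH_{(i)})$. The filtered product then descends to $\overline{\cup}$ on the associated graded. This agrees with the hypercohomology of the induced sheaf-level product $\overline{\cup}:\CH_{(i)}\otimes\CH_{(j)}\to\CH_{(i+j)}$, since both are obtained from the truncated product by passing to the quotient ${^\Fp}\tau_{\le i}/{^\Fp}\tau_{\le i-1}$. This gives the bigraded algebra \eqref{intrinsic}, with $d$ the cohomological degree, since cup product adds degrees.

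For (b), I apply $H^*(B,-)$ to the isomorphism of algebra objects in Theorem \ref{thm3.2}(c). On the left this produces $\bigoplus_{i,d}\mathrm{Gr}^P_iH^d(M,\BQ)$ with $\overline{\cup}$, by the previous step. On the right it produces $\bigoplus_i H^*(B,\CH^\vee_{(2g-i)}[2g-2i])$. In degree $d$ this equals $\mathrm{Gr}^P_{2g-i}H^{d+2g-2i}(M^\vee,\BQ)$, and the product is the hypercohomology of $[\overline{\Gamma}_{\mathrm{mult}}]_*$. That hypercohomology is exactly the convolution on $H^*(M^\vee)$ induced by the correspondence class, read on graded pieces. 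The isomorphism is compatible with the $H^*(B,\BQ)$-module structures because it comes from morphisms in $D^b_c(B)$, which are linear over $H^*(B)$ by the projection formula.

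The step needing the most care is checking that the hypercohomology of sheaf-level algebra structures gives the cohomological ones. Concretely:
\begin{enumerate}
\item[(i)] the lax monoidal structure of $H^*(B,-)$ must commute with the passage to graded pieces;
\item[(ii)] the shifts $[2g-2i]$ must produce exactly the indices $d+2g-2i$ claimed;
\item[(iii)] the sign conventions must match, which is harmless over $\BQ$.
\end{enumerate}
I would handle these by writing everything as morphisms in $D^b_c(B)$ and using only the functoriality of $R\Gamma(B,-)$.
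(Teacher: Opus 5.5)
Your proposal is correct and follows the paper's route exactly: the paper states Theorem \ref{thm3.3} as an immediate consequence of Theorem \ref{thm3.2}(b) and (c) after taking $H^*(B,-)$, and gives no further detail. Your commutative square, the identification $\mathrm{Gr}^P_iH^*(M,\BQ)=H^*(B,\CH_{(i)})$ (which rests on the decomposition theorem), and the shift bookkeeping $H^d(B,\CH^\vee_{(2g-i)}[2g-2i])=\mathrm{Gr}^P_{2g-i}H^{d+2g-2i}(M^\vee,\BQ)$ supply precisely the details the paper leaves implicit.
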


Although the Fourier transforms do not appear \emph{directly} in the statement of Theorem \ref{thm3.2}, they play a crucial role in each part of its proof, in a manner analogous to the corresponding arguments for abelian schemes.

\begin{proof}[Ingredients of the proof of Theorem \ref{thm3.2}]
The Poincar\'e objects $\CP, \CP^{-1}$ given by Axiom C yield the Fourier transforms $\mathfrak{F}, \mathfrak{F}^{-1}$ as in the abelian scheme case --- they are naturally algebraic cycles of mixed degree on the relative product $M^\vee \times_B M$. In particular, we get the decompositions of the relative diagonal classes
\begin{equation}\label{diag2}
[\Delta_{M/B}] = \mathfrak{F} \circ \mathfrak{F}^{-1} \in \mathrm{CH}_{\dim M}(M \times_B M), \quad [\Delta_{M^\vee/B}] = \mathfrak{F}^{-1} \circ \mathfrak{F} \in \mathrm{CH}_{\dim M^\vee}(M^\vee\times_B M^\vee).
\end{equation}

Then Axiom C+ implies the following weaker version of the Fourier vanishing in Proposition~\ref{prop1.1}.

\begin{prop}[$\frac{1}{2}$ Fourier vanishing]\label{prop3.4}
   For $i+j < 2g$, we have
\[
\mathfrak{F}_i\circ \mathfrak{F}^{-1}_{j}=0 \in \mathrm{CH}_{\dim B + 3g -i-j}(M \times_B M), \quad \mathfrak{F}^{-1}_i \circ \mathfrak{F}_j = 0 \in \mathrm{CH}_{\dim B + 3g-i-j}(M^\vee \times_B M^\vee).
\]    
\end{prop}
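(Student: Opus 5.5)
The plan is to replace the ``multiplication by $N$'' homogeneity from the proof of Proposition \ref{prop1.1} with the support condition of Axiom C+ together with a Riemann--Roch/Adams-operation argument. I treat the second identity; the first is symmetric, interchanging the roles of $M$ and $M^\vee$.

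First, observe that tensor powers of $\CP$ have Chern character given by Adams operations. Since $\ch(\CP^{\otimes N})$ should equal $\ch(\CP)^N$ in a suitable sense, and more precisely its degree $j$ part should scale, we expect
\[
\ch_j(\CP^{\otimes N}) = N^j \mathfrak{F}_j .
\]
Over $U$, where $\CP$ is a line bundle, this is immediate. In general, $\CP$ is not perfect, so I would define $\mathfrak{F}$ through a perfect model. One option is to use $\tau$ (Riemann--Roch) on the singular $M^\vee\times_B M$ viewed inside the nonsingular $M^\vee\times M$. The relevant correspondences are then built using the refined Gysin maps along the diagonal of $B$. I would arrange the normalization of $\mathfrak{F}$ so that the Adams-type identity $\mathfrak{F}^{(N)} := \ch(\CP^{\otimes N}) = \sum_j N^j \mathfrak{F}_j$ holds, with $\mathfrak{F}^{(N)}$ the cycle attached to $\CP^{\otimes N}$. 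This is the step I expect to be the main obstacle: making Chern characters of non-perfect sheaves on a singular relative product behave multiplicatively. My first attempt would be to take $\CP$ as a maximal Cohen--Macaulay sheaf and to work with its resolution on the smooth ambient $M^\vee\times M$. If the multiplicativity fails at the level of cycles, I would fall back on the statement only after composing with $\mathfrak{F}^{-1}$, which is all that is needed.

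Second, I apply Grothendieck--Riemann--Roch for the composition of kernels. This gives that the cycle class of $\CP^{-1}\circ \CP^{\otimes N}$ on $M^\vee\times_B M^\vee$, suitably Todd-twisted, equals
\[
\mathfrak{F}^{-1}\circ \mathfrak{F}^{(N)} = \sum_{i,j} N^j\, \mathfrak{F}^{-1}_i\circ \mathfrak{F}_j .
\]
The Todd twist only shifts degrees upward, by classes pulled back from the smooth factors. By Axiom C+, this object is supported in codimension $g$ for infinitely many $N$. Hence its Chern character (localized on the support, via $\tau$ with supports) lives in codimension $\ge g$. Equivalently, every component of the sum in codimension $k=i+j-g<g$, i.e.\ $i+j<2g$, vanishes.

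Third, I fix $k<2g$ and look at the degree $k$ part. This yields $\sum_{i+j=k} N^j\,\mathfrak{F}^{-1}_i\circ\mathfrak{F}_j=0$ for infinitely many $N$. One has to account for the Todd correction: it mixes in only terms of strictly lower $i+j$, so I would argue by induction on $k$ starting from the lowest degree. A Vandermonde argument in $N$ then separates the powers $N^j$, giving each $\mathfrak{F}^{-1}_i\circ\mathfrak{F}_j=0$ for $i+j<2g$. Finally, the dimension bookkeeping $\dim B+3g-i-j$ confirms the asserted Chow group.
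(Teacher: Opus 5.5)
\paragraph{A gap in step one.} Your overall strategy matches the one the paper points to: Riemann--Roch for the composition, the support bound of Axiom C+ killing everything in codimension $<g$, and a Vandermonde argument in $N$. Your bookkeeping $i+j-g<g\iff i+j<2g$ is right, and step three is fine given step one. But step one carries all the content, and your justification for it is wrong; this is not a normalization issue.

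The scaling $\ch_j(\psi^N x)=N^j\ch_j(x)$ is a property of the Adams operation $\psi^N$, and $\psi^N$ agrees with the $N$-th tensor power only on line bundles. The multiplicativity you hope to secure would give $\ch(\CP^{\otimes N})=\ch(\CP)^N=\exp\bigl(N\log\ch(\CP)\bigr)$. Comparing coefficients of $N^1$, this equals $\sum_j N^j\ch_j(\CP)$ for infinitely many $N$ only if $\ch(\CP)=\exp(\ch_1(\CP))$.

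That fails as soon as $\CP$ is not locally free, even for perfect complexes on a smooth variety. For the ideal sheaf $I_p$ of a point on a surface, $\ch(I_p\otimes^{L}I_p)=(1-[p])^2=1-2[p]$, whereas $\sum_j 2^j\ch_j(I_p)=1-4[p]$. The discrepancy is supported on the locus where $\CP$ is not a line bundle. It is intrinsic to $\CP$, not a universal Todd factor, so no normalization removes it.

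With only $\exp(N\log\ch(\CP))$ available, your Vandermonde step separates powers of $\log\ch(\CP)$, not the graded pieces $\mathfrak{F}_j$. It therefore does not isolate the individual terms $\mathfrak{F}^{-1}_i\circ\mathfrak{F}_j$.

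\paragraph{Further symptoms, and what is missing.} The same problem shows up twice more.
\begin{itemize}
\item The object $\CP^{-1}\circ\CP^{\otimes N}$ in Axiom C+ may be unbounded. It then has no $\tau$-class, so your step two cannot be applied to it as stated.
\item A resolution of $\CP$ on the smooth $M^\vee\times M$ does not compute $\CP^{\otimes N}$. Tensor products over $\CO_{M^\vee\times M}$ differ from those over $\CO_{M^\vee\times_B M}$ by Koszul factors of the conormal bundle, which is the pullback of $\Omega_B$.
\end{itemize}

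What is missing is to work with the genuine $\psi^N$, for instance on $K$-theory with supports of $M^\vee\times M$. There $\CP$ is perfect and the $N^j$-scaling is a theorem. One must then show that the support bound for $\CP^{-1}\circ\CP^{\otimes N}$ transfers to $\CP^{-1}\circ\psi^N(\CP)$. The two agree, up to a class pulled back from $B$, over the locus where $\CP$ is a line bundle. Controlling the difference elsewhere is exactly the point the paper delegates to the Adams-operation argument of \cite[Section 3.5]{MSY}. Your fallback of arguing ``only after composing with $\mathfrak{F}^{-1}$'' names this target but gives no mechanism.

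\paragraph{A smaller point.} Riemann--Roch for the projection $M^\vee\times_B M\times_B M^\vee\to M^\vee\times_B M^\vee$ produces a Todd class on the middle factor $M$. Your induction on $k$ cannot handle it: knowing $\mathfrak{F}^{-1}_i\circ\mathfrak{F}_j=0$ says nothing about $\mathfrak{F}^{-1}_i\circ(T\cdot\mathfrak{F}_j)$ when $T$ is pulled back from $M$. The fix is to build that Todd class into $\mathfrak{F}^{-1}$, as \eqref{diag2} requires. The remaining twists then come from the outer factor or from $B$. Those preserve supports, so no induction is needed at all.
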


Recall that the proof of Proposition \ref{prop1.1} relies on the ``multiplication by $N$'' map. The (weaker) replacement Axiom C+ recovers half of it (as in Proposition \ref{prop3.4}) using Adams operations; the detailed argument was given in \cite[Section 3.5]{MSY}.\footnote{We note that although the proof given in \cite[Section 3.5]{MSY} only handles compactified Jacobians, the essential geometric input is indeed Axiom C+.} 

It was then explained in \cite[Section 2.5.5]{MSY} that Proposition \ref{prop3.4} combined with the identities~(\ref{diag2}) produces motivic decompositions for both $f: M \to B$ and $f^\vee: M \to B$. Finally, Axiom A on the support was used to verify that the sheaf-theoretic specialization of the motivic decompositions we constructed indeed recovers the decomposition theorem for $f, f^\vee$; see~\cite[Section 2.5.2]{MSY}. This completes the proof of part (a).

As a byproduct of the proof of (a), we showed that the projector given by
\begin{equation}\label{P_cycle}
\mathfrak{p}_k: = \sum_{i\leq k}\mathfrak{F}_i\circ \mathfrak{F}^{-1}_{2g-i}: Rf_*\BQ_M \to Rf_* \BQ_M
\end{equation}
realizes exactly the perverse truncation functor
\[
{^\Fp}\tau_{\leq k + \dim B}Rf_{*}\BQ_{M} \to Rf_{*}\BQ_{M}
\]
whose orthogonal complement is given by $[\Delta_{M/B}] - \Fp_{k}$. This reduces the multiplicativity of the perverse truncation functor to verifying the vanishing
\[
 \left([\Delta_{M/B}] - \Fp_{k+l}\right) \circ [\Delta^{\mathrm{sm}}_{M/B}] \circ (\Fp_k \times \Fp_l)= 0.
\]
As explained in \cite[Section 2.5.3]{MSY}, this vanishing follows from Axiom D, namely from the assumption that the support of the convolution kernel has codimension $g$. In particular, for parts (a) and (b), we do not require the full strength of Axiom D, but only the codimension condition.

The last condition of Axiom D, that the purely codimension $g$ support of the convolution kernel is $\overline{\Gamma}_{\mathrm{mult}}$, was only used in verifying (c); we refer to \cite[Proof of Theorem 1.5]{BMSY2} for more details.
\end{proof}

\subsection{Applications}
We discuss several applications of the theory of dualizable abelian fibrations.
\medskip

\noindent {\bf Motivic decompositions.} The results above allow us to prove Conjecture \ref{conj3.1} for many abelian fibrations, including all the examples given in Section \ref{sec2.3}. Furthermore, we obtain the following result which at a first glance is not a direct consequence of Theorem \ref{thm3.2}.

\begin{thm}[\cite{MSY2}]\label{thm3.5}
Let 
\[
h: \mathrm{Higgs}_{n,d} \to \CA
\]
be the Hitchin system associated with a curve $\Sigma$ of genus $g\geq 2$, and coprime rank $n$ and degree~$d$. Then Conjecture \ref{conj3.1} holds for $h: \mathrm{Higgs}_{n,d} \to \CA$.
\end{thm}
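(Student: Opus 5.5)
The plan is to reduce the motivic decomposition for the (non-dualizable) Hitchin system $h: \mathrm{Higgs}_{n,d} \to \CA$ to the parabolic case, where Theorem \ref{thm2.9} and Theorem \ref{thm3.2}(a) already provide a motivic decomposition. The bridge is a family of parabolic Hitchin moduli spaces over the space of residues. Let $\eta$ range over an affine space $V$ of residues, and consider the total family
\[
\mathcal{M} \to \mathcal{B} \times V,
\]
whose fiber over a general residue $\eta$ is the parabolic Hitchin system $f_d: M^\eta_{n,d} \to B$. Its fiber over $\eta = 0$ is the moduli of parabolic Higgs bundles with nilpotent residue. Via Grothendieck--Springer type arguments, this nilpotent-residue space is related to $\mathrm{Higgs}_{n,d}$ twisted by $\omega_\Sigma(p)$. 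A further comparison (as in \cite{HMMS}) relates $\omega_\Sigma(p)$-twisted Higgs bundles to the usual $\mathrm{Higgs}_{n,d}$ with a fixed eigenvalue behavior at $p$.

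The key steps, in order, are as follows.

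First, construct a relative motivic decomposition for the whole family $\mathcal{M} \to \mathcal{B}\times V$, or at least over an open set containing the general-residue locus. I would do this by verifying the dualizable axioms in families: the Poincar\'e sheaf of Arinkin varies in families of spectral curves, and $\delta$-regularity holds, so Theorem \ref{thm3.2}(a) applies and produces projectors $\mathfrak{q}_i$ built from $\mathfrak{F}_i\circ\mathfrak{F}^{-1}_{2g-i}$.

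Second, extend these projectors to the whole family, including $\eta=0$, by taking Zariski closures of the cycles. I would then show that the specialized correspondences still form orthogonal projectors that cut out the perverse pieces. The tools for this step are:
\begin{itemize}
    \item specialization of Chow cycles (Fulton's refined Gysin map to the fiber $\eta=0$);
    \item the fact that the family is topologically trivial over $V$ on the level of $R(\cdot)_*\BQ$, since the $\mathbb{G}_m$-scaling action on $V$ contracts to $0$ and the total map is proper.
\end{itemize}
Orthogonality and idempotence in Chow groups persist under specialization, because refined Gysin pullback is compatible with composition of correspondences.

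Third, transport the resulting motivic decomposition from the nilpotent-residue parabolic space to $\mathrm{Higgs}_{n,d}$. One route realizes $\mathrm{Higgs}_{n,d}$ (or its $\omega(p)$-twisted variant) via the forgetful map from parabolic to ordinary data, which is generically a flag-variety fibration (Springer resolution). The motive of the source then contains the motive of the target as a direct summand, using the Springer correspondence or the top-degree fundamental class of the flag fibration. An alternative, if direct is cleaner, is to restrict to the $\omega_\Sigma$-twisted locus inside the $\omega_\Sigma(p)$-family, a closed subvariety cut out by the vanishing of residue. There I would apply Corti--Hanamura-type arguments: a motivic decomposition of an ambient family, plus full control of supports, restricts.

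The main obstacle is the second and third steps, namely ensuring that the decomposition theorem for the special fiber, and then for $\mathrm{Higgs}_{n,d}$, is actually recovered. The usual Hitchin system has nonfull supports \cite{dCHeM}, so the specialized projectors must be matched with the genuinely more complicated perverse decomposition. I would first try to handle this by showing that the specialized projectors act correctly on $Rh_*\BQ$ after passing through the Springer summand, checking against the known support description. Failing that, I would use a weight/$\mathbb{G}_m$-equivariance argument to identify the pieces cohomologically, since a motivic decomposition only needs the sheaf-theoretic realization to agree with some decomposition of the form \eqref{DDTT}.
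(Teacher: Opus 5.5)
\textbf{Verdict: genuine gaps.} Your outline matches the paper's: reduce to the parabolic Hitchin system $f_d\colon M^\eta_{n,d}\to B$ with general residue, which is dualizable (Theorem~\ref{thm2.9}) and so has Fourier projectors by Theorem~\ref{thm3.2}(a), then degenerate the residue and use Springer theory. The Chow-theoretic half of your second step is sound. Along a general line $L=\{t\eta\}$ through $0$ the family $\mathcal{M}_L\to L$ is smooth, and Fulton's specialization map is well defined: cycles supported on the special fiber are killed by the Gysin map, so the choice of closure does not matter. Specialization also commutes with composition of correspondences. So the specialized cycles are orthogonal idempotents summing to the relative diagonal of the nilpotent-residue space $f^0\colon M^0\to B_0$.

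\textbf{First gap: the sheaf-theoretic realization on $f^0$.} You need property $(\ast)$ of Section~\ref{sec3.4} for $f^0$, i.e.\ that the specialized projectors realize ${}^\Fp\tau_{\le k}Rf^0_*\BQ_{M^0}$. The fibration $f^0$ does \emph{not} have full support: the zero-residue stratum, where $\mathrm{Higgs}_{n,d}$ lives, contributes summands supported on $\CA\subsetneq B_0$. So the argument behind Theorem~\ref{thm3.2}(a), which rests on Axiom~B, is unavailable.

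Your justification, that the family is ``topologically trivial over $V$ on the level of $R(\cdot)_*\BQ$'' because of the contracting $\mathbb{G}_m$-action, does not work. The contraction only identifies the absolute cohomologies $H^*(M^\eta,\BQ)\cong H^*(M^0,\BQ)$. The complexes $Rf^\eta_*\BQ$ and $Rf^0_*\BQ$ on the Hitchin bases have different supports, so no trivialization can carry perverse truncations from one to the other. Your fallbacks (comparing with the support description, a weight argument) are not yet arguments.

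The missing idea is the nearby-cycle input the paper cites for exactly this purpose (Section~\ref{sec3.4}):
\begin{itemize}
\item since $\mathcal{M}_L\to L$ is smooth, $\psi_t\BQ_{\mathcal{M}_L}=\BQ_{M^0}$;
\item $\psi_t$ commutes with proper pushforward to the family of Hitchin bases;
\item $\psi_t[-1]$ is perverse t-exact.
\end{itemize}
Hence $\psi_t$ carries the perverse truncations of the (trivial) general-residue family to those of $Rf^0_*\BQ_{M^0}$. Combined with the compatibility between specialization of correspondences and their action on nearby cycles, this gives $(\ast)$ for $f^0$ with no support hypothesis.

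\textbf{Second gap: the transport to $\mathrm{Higgs}_{n,d}$.} Your description of the geometry is off. Forgetting the flag maps $M^0$ onto the nilpotent-residue locus of the $\omega_\Sigma(p)$-twisted moduli by a Springer-type resolution. This map is birational (an isomorphism where the residue is regular nilpotent), not generically a flag-variety fibration; the full flag variety appears only over the zero-residue locus. There is also no principle that a motivic decomposition of an ambient family restricts to a closed subvariety ``given control of supports''. Gysin-restricting the projectors gives correspondences with no reason to be idempotent or to realize the perverse truncations of $Rh_*\BQ$.

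What Springer theory can provide is an $S_n$-action by correspondences on $M^0$ over $B_0$, an isotypic idempotent $e$ of which should cut out a Tate twist of the relative motive $h(\mathrm{Higgs}_{n,d})$. But knowing that $h(\mathrm{Higgs}_{n,d})$ is a summand of $h(M^0)$ says nothing about its decomposition unless you also prove that $e$ commutes with the specialized Fourier projectors. Only then are the $e\circ\mathfrak{q}_i$ orthogonal idempotents on the summand realizing its perverse decomposition. This compatibility is the second essential ingredient missing from your proposal.
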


As we mentioned in Section \ref{sec2.3}, the Hitchin system above is not a dualizable abelian fibration. The proof of Theorem \ref{thm3.5} is obtained by reducing it to the case of parabolic Hitchin system $f_d: M^\eta_{n,d} \to B$ using Springer theory and nearby/vanishing cycle techniques. The latter is dualizable by Theorem \ref{thm2.9} whose motivic decomposition is given by Theorem \ref{thm3.2}(a).

\medskip
\noindent {\bf Multiplicativity.}
Let $C_0$ be an integral locally planar curve, and let $\overline{J}_{C_0}$ be its compactified Jacobian; it is in general a very singular variety. Maulik--Yun \cite{MY} introduced a perverse filtration on the cohomology
\[
P_0H^*(\overline{J}_{C_0}, \BQ) \subset P_1H^*(\overline{J}_{C_0}, \BQ) \subset \cdots \subset H^*(\overline{J}_{C_0}, \BQ).
\]
This perverse filtration is conjectured to have deep connections to knot invariants associated with the singular points of $X$ \cite{ORS, Shen_survey}, Using techniques in representation theory, Oblomkov--Yun proved in \cite{OY} that the perverse filtration is multiplicative with respect to the cup-product for rational curves with a singular point given by 
\[
x^p-y^q, \quad \mathrm{gcd}(p,q)=1.
\]
The theory of dualizable abelian fibrations yields a proof of the multiplicativity for any integral locally planar curve $C_0$; see \cite[Theorem 0.7]{MSY}.

\begin{thm}[\cite{MSY}]\label{thm3.6}
    The perverse filtration $P_\bullet H^*(\overline{J}_{C_0}, \BQ)$ is multiplicative, \emph{i.e.},
    \[
\cup:    P_iH^*(\overline{J}_{C_0}, \BQ) \times P_{j}H^*(\overline{J}_{C_0}, \BQ) \to P_{i+j} H^*(\overline{J}_{C_0}, \BQ).
    \]
\end{thm}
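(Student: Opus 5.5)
The idea is to place the single curve $C_0$ in a family to which Theorem \ref{thm2.6} applies, and then restrict to the central fiber. Since $C_0$ is integral and locally planar, it has a versal deformation, and by a standard argument (Fantechi--G\"ottsche--van Straten, or embedding $C_0$ into a sufficiently ample linear system on a surface) there is a proper flat family $C \to B$ of integral locally planar curves. This family has $B$ nonsingular, contains a point $0\in B$ with $C_0 = C_{0}$, and has nonsingular total space $\overline{J}_C$. By Theorem \ref{thm2.6}, $f:\overline{J}_C \to B$ is then a dualizable abelian fibration, so Theorem \ref{thm3.2}(b) gives multiplicative perverse truncation functors
\[
\cup: {^\Fp}\tau_{\leq i+\dim B}Rf_*\BQ \otimes {^\Fp}\tau_{\leq j+\dim B}Rf_*\BQ \to {^\Fp}\tau_{\leq i+j+\dim B}Rf_*\BQ .
\]

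Next I will relate this to the Maulik--Yun filtration on the central fiber. By Axiom B (full support, from Ng\^o's support theorem via Lemma \ref{lem2.7}), each $\CH_{(i)}$ is the intermediate extension of the local system $\wedge^i$ of $R^1$ on the smooth locus. The Maulik--Yun perverse filtration on $H^*(\overline{J}_{C_0},\BQ)$ is defined, and shown to be independent of choices, precisely as the image of the filtration obtained by restricting ${^\Fp}\tau_{\leq \bullet}Rf_*\BQ$ to the point $0$ for such a family. Since proper base change gives $(Rf_*\BQ)_0 = H^*(\overline{J}_{C_0},\BQ)$, I will define $P_k$ as the image of $H^*\big(({^\Fp}\tau_{\leq k+\dim B}Rf_*\BQ)_0\big)$. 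Pointwise restriction is compatible with the cup-product on stalks, because $i_0^*$ is a tensor functor and the cup-product on $Rf_*\BQ$ restricts to the cup-product on the fiber. Applying $i_0^*$ and taking cohomology of the multiplicativity morphism then yields $P_i\cup P_j\subset P_{i+j}$.

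I expect the main obstacle to be the first step: producing a family with nonsingular relative compactified Jacobian (equivalently, $\delta$-regularity) that also realizes the Maulik--Yun filtration. I plan to use a versal family, whose compactified Jacobian is smooth by Fantechi--G\"ottsche--van Straten, and then invoke Maulik--Yun's theorem that the resulting filtration on the central fiber is independent of the chosen family. A secondary subtlety is to check that, under the decomposition theorem, the stalk-level image filtration agrees with the one defined using the truncation functors themselves. This should follow because the truncation maps split, so restricting to stalks preserves the images.
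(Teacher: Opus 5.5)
Your proposal is correct and follows the paper's own argument. The paper likewise realizes $\overline{J}_{C_0}$ as the central fiber of a family with nonsingular $\overline{J}_C$, which by definition computes the Maulik--Yun filtration. It then invokes Theorem \ref{thm2.6} and Theorem \ref{thm3.2}(b) and restricts the multiplicative truncation maps to the point $0$; your treatment of the versal family and of $i_0^*$ with the cup-product spells out what the paper leaves implicit.
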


Recall that the perverse filtration of Maulik--Yun is defined as follows. By embedding the curve $C_0$ into an appropriate family $C \to B$ as a fiber over a point $0\in B$, we obtain the Cartesian diagram
\[
\begin{tikzcd}
\overline{J}_{C_0} \arrow[r, hook, ""] \arrow[d, ""] & \overline{J}_C \arrow[d, " f"] \\
\{0\} \arrow[r, hook, " "] & B.
\end{tikzcd}
\] 
The perverse filtration for the fiber over $0\in B$ is then defined by the perverse truncation functor applied to $Rf_* \BQ_{\overline{J}_C}$. Therefore, Theorem \ref{thm3.6} can be deduced from Theorem \ref{thm2.6} and Theorem \ref{thm3.2}(b).

\medskip
\noindent {\bf Perverse vs.~Chern.} A key step in the proof of Theorem \ref{thm3.2} is the characterization (\ref{P_cycle}) of the perverse truncation functor using the Fourier transform. As a consequence, we obtain the following; see \cite[Theorem 0.1]{MSY}.

\begin{thm}[\cite{MSY}]\label{thm3.77}
    Let $f: M\to B$ be a dualizable abelian fibration with a dual fibration $f^\vee: M^\vee \to B$. Then we have
    \[
\mathrm{Im}\left(\mathfrak{F}_i: H^*(M^\vee, \BQ) \to H^*(M, \BQ)\right) \subset P_iH^*(M, \BQ).
    \]
\end{thm}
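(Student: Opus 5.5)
The strategy is to factor $\mathfrak{F}_i$ through the projector $\mathfrak{p}_i$ of \eqref{P_cycle}, which by the proof of Theorem \ref{thm3.2} realizes the perverse truncation ${^\Fp}\tau_{\leq i+\dim B}Rf_*\BQ_M \to Rf_*\BQ_M$. Concretely, we aim to show that
\[
\left([\Delta_{M/B}] - \mathfrak{p}_i\right)\circ \mathfrak{F}_i = 0
\]
as a correspondence from $Rf^\vee_*\BQ_{M^\vee}$ to $Rf_*\BQ_M$, at least after sheaf-theoretic specialization. Once this holds, $\mathfrak{F}_i = \mathfrak{p}_i\circ\mathfrak{F}_i$ as maps on cohomology. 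The image of $\mathfrak{p}_i$ on $H^*(M,\BQ) = H^*(B, Rf_*\BQ_M)$ is the image of $H^*(B,{^\Fp}\tau_{\leq i+\dim B}Rf_*\BQ_M)$, which is $P_iH^*(M,\BQ)$ by definition. This gives the theorem.

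To prove the vanishing, expand using \eqref{diag2}:
\[
[\Delta_{M/B}] - \mathfrak{p}_i = \sum_{k>i}\mathfrak{F}_k\circ\mathfrak{F}^{-1}_{2g-k} + \sum_{k+l\neq 2g}\mathfrak{F}_k\circ\mathfrak{F}^{-1}_l,
\]
where only the degree-$g$ part of $\mathfrak{F}\circ\mathfrak{F}^{-1}$ survives. Rather than working with this, a cleaner route is to compute $\mathfrak{F}\circ\mathfrak{F}^{-1}\circ\mathfrak{F}_i = \mathfrak{F}_i$ and split by degree. We have $\mathfrak{F}_i = \sum_{k,l}\mathfrak{F}_k\circ\mathfrak{F}^{-1}_l\circ\mathfrak{F}_i$, and by Proposition \ref{prop3.4} the terms with $l+i<2g$ vanish. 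Hence
\[
\mathfrak{F}_i = \sum_{k}\sum_{l\geq 2g-i}\mathfrak{F}_k\circ\mathfrak{F}^{-1}_l\circ\mathfrak{F}_i .
\]
Comparing codimensions, a term $\mathfrak{F}_k\circ\mathfrak{F}^{-1}_l\circ\mathfrak{F}_i$ has Chow degree $k+l+i-2g$ on $M^\vee\times_B M$, so only $k+l=2g$ contributes to the degree-$i$ component. With $l\geq 2g-i$ this forces $k\leq i$, and therefore
\[
\mathfrak{F}_i=\sum_{k\le i}\mathfrak{F}_k\circ\mathfrak{F}^{-1}_{2g-k}\circ\mathfrak{F}_i=\mathfrak{p}_i\circ\mathfrak{F}_i
\]
in Chow. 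This is exactly the desired factorization, now at the level of cycles.

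The main thing to check is that these identities of cycle classes on the possibly singular relative products $M\times_B M^\vee\times_B M$ are legitimate. Composition of relative correspondences must be defined via refined Gysin maps (Fulton), which is fine since $M$ and $M^\vee$ are nonsingular. The relation must also be compatible with passage to maps $Rf^\vee_*\BQ_{M^\vee}\to Rf_*\BQ_M$ in $D^b_c(B)$, as in the framework of Corti--Hanamura. The degree bookkeeping ($\mathfrak{F}_k\in\mathrm{CH}_{\dim B+2g-k}$, and compositions adding codimension and subtracting $g$) also needs care. Beyond that, the only nonformal input is the identification of $\mathfrak{p}_i$ with the perverse truncation, which we take from the proof of Theorem \ref{thm3.2}(a), where it relies on Axiom B.
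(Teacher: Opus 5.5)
Your proposal is correct and follows the paper's route. The paper treats the inclusion as a direct consequence of the fact that $\mathfrak{p}_i=\sum_{k\le i}\mathfrak{F}_k\circ\mathfrak{F}^{-1}_{2g-k}$ realizes ${^\Fp}\tau_{\leq i+\dim B}Rf_*\BQ_M\to Rf_*\BQ_M$, which uses Axiom B; your Chow-level identity $\mathfrak{F}_i=\mathfrak{p}_i\circ\mathfrak{F}_i$, derived from \eqref{diag2}, Proposition~\ref{prop3.4} and the dimension count, is exactly the step that makes this deduction explicit and gives $\mathrm{Im}(\mathfrak{F}_i)\subset\mathrm{Im}(\mathfrak{p}_i)=P_iH^*(M,\BQ)$.
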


The reader may compare Theorem \ref{thm3.77} with (\ref{P=C_0}).

By a similar argument as in Section \ref{sec1.4}, Theorem \ref{thm3.77} combined with Theorem \ref{thm3.3}(a) leads to $P=C$ type results:
\begin{enumerate}
    \item[(a)] Applying Theorem \ref{thm3.77} to the parabolic Hitchin system $f_d: M^\eta_{n,d} \to B$ yields a proof of the $P=W$ conjecture \cite{dCHM, MS_PW, HMMS}; see \cite[Proof of Theorem 0.4]{MSY}.
    \item[(b)] Applying Theorem \ref{thm3.77} to the Le Potier moduli space of $1$-dimensional semistable sheaves on a del Pezzo surface yields a proof of the $P=C$ conjecture \cite{KPS, KLMP} for low degree cohomology; see \cite[Theorem 0.6]{MSY} and \cite[Theorems 0.3 and 0.6]{PSSZ}.
\end{enumerate}

\begin{rmk}
    More accurately, in both (a, b) above, we apply a variant of Theorem \ref{thm3.7}; see the discussions in the paragraph after Theorem \ref{thm2.9} and Section \ref{sec3.4}.
\end{rmk}
\medskip
\noindent {\bf Intrinsic cohomology rings.} Theorem \ref{thm3.3}(b) can be applied in the following interesting way: if we have two abelian fibrations $f_1: M_1 \to B$ and $f_2: M_2 \to B$ which share a common dual $f^\vee: M^\vee \to B$, then we must have an isomorphism of the bigraded $H^*(B, \BQ)$-algebras:
\[
\left(\bigoplus_{i,d} \mathrm{Gr}^P_iH^d(M_1, \BQ), \overline{\cup} \right) \simeq \left(\bigoplus_{i,d} \mathrm{Gr}^P_iH^d(M_2, \BQ), \overline{\cup} \right);
\]
this is because both sides are isomorphic to the convolution algebra on the associated graded cohomology of $M^\vee$ given by the class $[\overline{\Gamma}_{\mathrm{mult}}]$. This leads to the following theorem proven in~\cite[Theorem 0.6]{BMSY2}, which constructs the \emph{intrinsic cohomology ring} of the universal compactified Jacobians.

\begin{thm}[\cite{BMSY2}]\label{thm3.7}
   Assume $n\geq 1$. Let $f^\phi: \overline{J}^\phi_{g,n} \to \overline{\CM}_{g,n}$ be the universal fine compactified Jacobian associated with a nondegenerate stability condition $\phi$. Then the induced associated graded cohomology ring
   \[
\left(\bigoplus_{i,d} \mathrm{Gr}^P_iH^d(\overline{J}^\phi_{g,n}, \BQ),~~~ \overline{\cup} \right),
\]
as a bigraded $H^*(B, \BQ)$-algebra, does not depend on the nondegenerate stability condition $\phi$.
\end{thm}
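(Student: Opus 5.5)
The plan is to derive Theorem \ref{thm3.7} formally from Theorem \ref{thm2.8} and Theorem \ref{thm3.3}(b), with the flexibility in the choice of dual doing all the work. Fix two nondegenerate stability conditions $\phi_1, \phi_2$ for $\overline{\CM}_{g,n}$. By Theorem \ref{thm2.8}, both $f^{\phi_1}$ and $f^{\phi_2}$ are dualizable abelian fibrations. Moreover, the dual of each may be taken to be $f^{\phi'}: \overline{J}^{\phi'}_{g,n}\to \overline{\CM}_{g,n}$ for any nondegenerate $\phi'$. We fix one such $\phi'$ (for instance $\phi' = \phi_1$) and use the same $M^\vee := \overline{J}^{\phi'}_{g,n}$ as a common dual for both fibrations.

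We then apply Theorem \ref{thm3.3}(b) twice. For $k = 1,2$ it gives an isomorphism of bigraded $H^*(\overline{\CM}_{g,n},\BQ)$-algebras
\[
\Big(\bigoplus_{i,d}\mathrm{Gr}^P_i H^d(\overline{J}^{\phi_k}_{g,n},\BQ),\overline{\cup}\Big)\simeq \Big(\bigoplus_{i,d}\mathrm{Gr}^P_{2g-i}H^{d+2g-2i}(M^\vee,\BQ),\ast\Big).
\]
Composing the first isomorphism with the inverse of the second yields the desired isomorphism between the two associated graded rings.

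The main obstacle is to check that the right-hand side is genuinely the same algebra in both cases, and not merely the same vector space. The convolution product $\ast$ is induced by the cycle $[\overline{\Gamma}_{\mathrm{mult}}]$ on $M^\vee\times_B M^\vee\times_B M^\vee$. By Axiom D, this cycle is the closure of the graph of the group law on the smooth locus $M^\vee_U$. It therefore depends only on $M^\vee\to B$ together with its abelian-scheme structure over $U$, and not on the Poincar\'e sheaf $\CP_k$ pairing $M^\vee$ with $\overline{J}^{\phi_k}_{g,n}$. The one point needing care is the group structure itself. The Poincar\'e sheaves $\CP_1,\CP_2$ restrict over $U$ to normalized Poincar\'e bundles, so we must verify that both induce the same identification $M^\vee_U\simeq \mathrm{Pic}^0(M_U)$ up to translation. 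Over the locus of smooth curves all $\overline{J}^{\phi}_{g,n}$ agree with the universal Jacobian, so this identification is canonical, and the graph $\overline{\Gamma}_{\mathrm{mult}}$ is literally the same subvariety.

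The bigrading and the $H^*(B,\BQ)$-module structures are also preserved, since every map involved is a relative correspondence over $B$. This compatibility was already built into Theorem \ref{thm3.3}(b). The hypothesis $n\geq 1$ enters only through Theorem \ref{thm2.8}.
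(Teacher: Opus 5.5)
Your proposal is correct and follows the paper's argument: you use Theorem~\ref{thm2.8} to choose a common dual $\overline{J}^{\phi'}_{g,n}$, apply Theorem~\ref{thm3.3}(b) to each fibration, and compare both associated graded rings through the convolution algebra on that common dual. Your extra check, that the class $[\overline{\Gamma}_{\mathrm{mult}}]$ (and hence $\ast$) does not depend on which Poincar\'e sheaf pairs the common dual with $\overline{J}^{\phi_k}_{g,n}$, is a point the paper leaves implicit, and the canonical identification over the smooth-curve locus is the right way to justify it.
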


As discussed above, Theorem \ref{thm3.7} is an immediate consequence of Theorem \ref{thm3.3}(b) and Theorem \ref{thm2.8}. On the other hand, it was proven in \cite[Theorem 0.5]{BMSY2} that the cohomology ring
\[
\left(H^*(\overline{J}^\phi_{g,1}, \BQ), \cup \right)
\]
is sensitive to $\phi$ as long as $g\geq 4$; these are exactly the examples in Theorem \ref{thm2.2}.

The results above suggest that the associated graded cohomology ring behaves ``better'' than the usual cohomology ring for dualizable abelian fibrations.

\subsection{Variants}\label{sec3.4}

To apply the theory of dualizable abelian fibrations, some variants are considered.

\medskip 

\noindent {\bf Torsors.} A very common situation is that the nonsingular fibers of an abelian fibration do not form an abelian scheme, but rather a torsor under an abelian scheme. Such cases were studied in \cite[Section 4]{MSY}, \cite{MSY2}, and \cite[Section 1]{BMSY2}, where the framework of dualizable abelian fibrations continues to apply with only minor modifications.

\medskip

\noindent {\bf Smaller support.} The full support assumption in Axiom B is only needed to verify that
\begin{equation*}
\textup{the correspondence \eqref{P_cycle} realizes the perverse truncation functor}.    \tag{$\ast$}
\end{equation*}
Therefore, if there are other methods to verify $(\ast)$, Axiom B can be removed. For example, if the structure of the support is not so complicated, one may try to verify ($\ast$) for the general point of \emph{every} support. Alternatively, nearby/vanishing cycle techniques were applied in \cite[Section 4]{MSY2} to verify ($\ast$) for certain Lagrangian fibrations.

\medskip 

\noindent {\bf Enhanced Axiom C+,}
In all the examples in Section \ref{sec2.3}, the following enhanced version of Axiom C+ is satisfied:

\noindent Assume that $f: M\to B$ and $f^\vee: M^\vee \to B$ given by Axiom A are $\delta$-regular weak abelian fibrations in the sense of Ng\^o \cite{Ngo} with the commutative group schemes $P\to B$ and $P^\vee \to B$. For any $P$-equivariant object $\CF \in D^b\mathrm{Coh}(M)$ and $P^\vee$-equivariant object $\CG \in D^b\mathrm{Coh}(M^\vee)$, the objects
\[
\CP^{-1} \circ \left(p^*\CF \otimes \CP^{\otimes N}  \otimes q^*\CG\right), \quad  \left(p^*\CF \otimes \CP^{\otimes N} \otimes q^*\CG\right)\circ \CP^{-1}
\]
have support of codimension $\geq g$ for infinitely many integers $N \geq 1$. Here $p,q$ are the natural projections from $M^\vee \times_B M$ to $M^\vee$ and $M$ respectively.

The proof of \cite[Theorem 0.1]{Ghosh} gives the following theorem.

\begin{thm}[Ghosh \cite{Ghosh}]\label{thm3.10}
    Assume that $f: M \to B$ is a dualizable abelian fibration satisfying the enhanced Axiom C+. Then the Chern classes of the tangent bundle $T_M$ satisfy
    \[
    c_i(T_M) \in P_i H^{2i}(M, \BQ).
    \]
\end{thm}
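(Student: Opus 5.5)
We aim to show that $\ch(T_M)$, or equivalently each Chern class $c_i(T_M)$, can be written as a Fourier transform of degree $i$ in the sense of Theorem \ref{thm3.77}. More precisely, we will show that $c_i(T_M)$ is annihilated by the complementary projector $[\Delta_{M/B}] - \mathfrak{p}_i$ of \eqref{P_cycle}. Since $\mathfrak{p}_i$ realizes the perverse truncation ${^\Fp}\tau_{\leq i+\dim B}$, this gives $c_i(T_M)\in P_iH^{2i}(M,\BQ)$. By multiplicativity (Theorem \ref{thm3.3}(a)) it suffices to treat a generating set of characteristic classes. We will work with the Chern character $\ch_k(T_M)$: Newton's identities express $c_i$ as a polynomial in the $\ch_k$ with $\sum k = i$, so $\ch_k(T_M)\in P_k$ for all $k$ implies the claim. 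Since $B$ is nonsingular and $T_B$ pulls back, the Chern classes of $f^*T_B$ lie in $P_0$. We can therefore replace $T_M$ by the virtual relative tangent complex $T_{M/B} = T_M - f^*T_B$, using multiplicativity once more.

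The key geometric input is that $T_{M/B}$ is $P$-equivariant for Ngô's group scheme action. The idea is to realize $\ch(T_{M/B})$ inside $\mathrm{CH}^*(M)$ as a class of the form $\mathfrak{F}(\alpha)$, where $\alpha$ is built on $M^\vee$ from $P$-equivariant and $P^\vee$-equivariant sheaves. For instance, we can use the normal bundle of the relative diagonal $\Delta_{M/B}\subset M\times_B M$, which equals $T_{M/B}$, together with the identities $[\Delta_{M/B}] = \mathfrak{F}\circ\mathfrak{F}^{-1}$ and $\CO_{\Delta_{M/B}}\simeq \CP\circ\CP^{-1}$. Applying Grothendieck--Riemann--Roch to $\CO_{\Delta}$ pulled back along the diagonal produces $\mathrm{td}(T_{M/B})^{-1}$, or a similar expression, in terms of $\CP\circ\CP^{-1}$. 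The degree-$k$ part of this expression should then be controlled by $\ch_k$ of the kernel $\left(p^*\CF\otimes\CP\otimes q^*\CG\right)\circ\CP^{-1}$ for suitable equivariant $\CF,\CG$.

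To obtain the degree bound we will mimic the proof of Proposition \ref{prop3.4}. Apply Adams operations $\psi^N$, which replace $\CP$ by $\CP^{\otimes N}$ and scale $\ch_k$ by $N^k$. The enhanced Axiom C+ then gives that $\CP^{-1}\circ(p^*\CF\otimes\CP^{\otimes N}\otimes q^*\CG)$ is supported in codimension $\geq g$ for infinitely many $N$. The resulting classes, viewed as polynomials in $N$, therefore have Chow-degree components vanishing below $g$. Extracting coefficients of $N^k$ yields a "$\tfrac12$ Fourier vanishing" for the twisted kernels. That vanishing gives
\[
([\Delta_{M/B}]-\mathfrak{p}_k)\circ\bigl(\text{twisted kernel}\bigr)_k = 0,
\]
which is exactly the statement that the relevant degree-$k$ class lies in $P_k$.

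The main obstacle is the middle step: producing an exact identity that expresses $\ch_k(T_{M/B})$, or $\mathrm{td}$, through the equivariantly twisted Fourier kernels with the correct grading. Here the singularities of $M^\vee\times_B M$ and the non-perfectness of $\CP$ make Riemann--Roch delicate. My first attempt will be to compute on $M\times_B M$ via $\CP\circ\CP^{-1}$, taking $\CF = \Omega^j_{M/B}$ (which is $P$-equivariant) and $\CG=\CO$. I will then use the Koszul-type identity $L\Delta^*\CO_\Delta = \bigoplus_j \Omega^j_{M/B}[j]$ to extract the exterior powers, and hence the Chern character, of $T_{M/B}$.
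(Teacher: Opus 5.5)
Your reductions are sound. These are: $c(T_M)=f^*c(T_B)\cdot c(T_{M/B})$ with $T_{M/B}=[T_M\to f^*T_B]$ the relative tangent complex; $f^*H^*(B,\BQ)\subset P_0$; Newton's identities together with Theorem \ref{thm3.3}(a); and $P$-equivariance of $T_{M/B}$, which holds because it is dual to $L_{M/B}$ and $P$ acts over $B$. However, the step you flag as the main obstacle is a genuine gap, and the route you sketch for it does not work. Where $f$ is not smooth, $M\times_B M$ is singular along $\Delta_{M/B}$ while $M$ is smooth. Hence $\Delta_{M/B}\hookrightarrow M\times_B M$ is not a regular embedding there, with three consequences:
\begin{itemize}
\item there is no normal bundle to identify with $T_{M/B}$;
\item $\CO_{\Delta_{M/B}}$ has infinite Tor-dimension on $M\times_B M$, so $L\Delta^*\CO_{\Delta}$ is unbounded;
\item $L\Delta^*\CO_{\Delta}$ is not $\bigoplus_j\Omega^j_{M/B}[j]$, and these sheaves are not even locally free.
\end{itemize}
Separately, your degree bookkeeping conflates two gradings. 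The scaling $\CP\mapsto\CP^{\otimes N}$ multiplies $\mathfrak{F}_i$ by $N^i$ and does nothing to the Chern degree of the twist. Twisting by the perfect complex $q^*\CF$ only multiplies the Riemann--Roch class by $q^*\ch(\CF)$, as in Proposition \ref{prop3.4}. So extracting the coefficient of $N^i$ from the cycle of $\CP^{-1}\circ(q^*\CF\otimes\CP^{\otimes N})$ only yields
\[
\sum_{j+a=e}\mathfrak{F}^{-1}_j\circ\bigl(q^*\ch_a(\CF)\cdot\mathfrak{F}_i\bigr)=0\quad\text{for } i+e<2g,
\]
in which all the $\ch_a(\CF)$ appear mixed. ``Extracting $N^k$'' isolates $\mathfrak{F}_k$, not $\ch_k(T_{M/B})$. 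The resulting identity is not of the form $([\Delta_{M/B}]-\mathfrak{p}_k)(\ch_k(T_{M/B}))=0$ that you need.

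No diagonal computation is needed: the enhanced Axiom C+ lets you twist by $\CF=T_{M/B}$ itself, with $\CG=\CO_{M^\vee}$, and the coefficient to use is $N^0$. Since $\mathfrak{F}_0=[M^\vee\times_B M]$, that coefficient is the pullback of $\mathfrak{F}^{-1}(\ch(\CF))$ along the projection of $M^\vee\times_B M^\vee$ onto its target factor. The codimension bound then gives $\sum_{j+a=e}\mathfrak{F}^{-1}_j(\ch_a(\CF))=0$ for $e<2g$.

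To separate the values of $a$ you need a second, independent scaling. The relation is linear in the class of $\CF$ and holds for every $P$-equivariant object; extract the $N$-coefficients object by object, since the set of good $N$ may depend on the object. Then apply it to Adams operations $\psi^m\CF$, formed in the Grothendieck group of $P$-equivariant perfect complexes (e.g.\ built from derived exterior powers of $L_{M/B}$), where $\ch_a(\psi^m\CF)=m^a\ch_a(\CF)$. Varying $m$ gives $\mathfrak{F}^{-1}_j(\ch_a(\CF))=0$ whenever $j+a<2g$.

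Since $[\Delta_{M/B}]=\sum_l\mathfrak{F}_l\circ\mathfrak{F}^{-1}_{2g-l}$, this says $([\Delta_{M/B}]-\mathfrak{p}_a)(\ch_a(\CF))=\sum_{l>a}\mathfrak{F}_l\mathfrak{F}^{-1}_{2g-l}(\ch_a(\CF))=0$. That is, $\ch_a(T_{M/B})\in P_aH^{2a}(M,\BQ)$, and your reductions finish the proof. In particular, you never need to write $\ch_k(T_{M/B})$ as a Fourier transform $\mathfrak{F}(\alpha)$. The paper only cites Ghosh for this theorem, so treat this as the natural way to use the enhanced hypothesis, not as a reproduction of his argument.
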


We refer to \cite{Ghosh} for more details, which also proved some motivic versions of Theorem \ref{thm3.10}.
 
\subsection{Lagrangian fibrations}
We conclude the discussion with Lagrangian fibrations. They form a distinguished class among all abelian fibrations and, from the perspective of the decomposition theorem, exhibit behavior that is closest to that of abelian schemes.

We say that $f: M \to B$ is a Lagrangian fibration if 
\begin{enumerate}
    \item[(a)] both $M, B$ are nonsingular, 
    \item[(b)] $f$ is proper and flat, and
    \item[(b)] a general fiber $F$ is an abelian variety of half dimension, such that there is a holomorphic symplectic form $\sigma$ on $M$ whose restriction to $F$ vanishes.
\end{enumerate}

We note that if $M$ is a projective irreducible symplectic manifold and $0< \dim B < \dim M$, then $f$ is automatically a Lagrangian fibration \cite{Mat}. For our definition of Lagrangian fibrations, we do \emph{not} assume that $M$ is projective or proper.

\begin{conj}[\cite{BMSY1, Shen_ICM}]\label{conj}
Let $f: M \to B$ be a Lagrangian fibration. The following hold.
\begin{enumerate}
    \item[(a)]  The morphism $f: M \to B$ admits a multiplicative decomposition theorem
    \[
    Rf_* \BQ_M \simeq \bigoplus_{i} \CH_{(i)},\quad \CH_{(i)} := {^\Fp}H^{i + \dim B}(Rf_{*}\BQ_M)[-i - \dim B] \in D^b_c(B)    \]
    \item[(b)] Let 
    \[
    H^*(M, \BQ) = \bigoplus_{i} H_{(i)}^*(M, \BQ)
    \]
    be the cohomological decomposition given by an isomorphism in (a). Then 
    \[
    c_i(T_M) \in H^{2i}_{(i)}(M, \BQ).
    \]
\end{enumerate}
\end{conj}

If Conjecture \ref{conj3.1} holds, then the perverse filtration for $f: M \to B$ admits a multiplicative splitting of the perverse filtration as in (b). This was known when $M$ is a projective irreducible symplectic manifold by \cite[Appendix A]{SY0}, or when $f: M \to B$ is the Hitchin system, where it follows from the $P=W$ conjecture; see \cite[Sections 0.3]{dCMS}. 

The reader may compare Theorem \ref{thm3.10} and Conjecture \ref{conj}(b).

\begin{example}
    For an abelian scheme $f: A\to B$, the Chern classes of $T_A$ are pulled back from the base $B$. Therefore we clearly have
    \[
    c_{i}(T_A) \in P_0H^{2i}(A, \BQ).
    \]
    Conjecture \ref{conj}(b) predicts that, if $f: A\to B$ is Lagrangian, then
    \[
    c_i(T_A)= 0, \quad i >0.
    \]
    This indeed holds by the Mumford relations for the Hodge bundle \cite[Corollary 5.3]{Mumford}.
\end{example}

Finally, we note that Conjecture \ref{conj} is expected to admit a motivic enhancement: there exists a multiplicative motivic decomposition
\[
h(M)  = \bigoplus_{i} h_i(M), \quad \cup: h_i(M) \times h_{j}(M) \to h_{i+j}(M)
\]
in the category of relative Chow motives over $B$, and the Chern classes satisfy
\[
c_i(T_M) \in \mathrm{CH}^{i}( h_{i}(M) ).
\]
We refer to \cite[Section 3]{BMSY1} for further discussions on connections to conjectures of Beauville and Voisin \cite{B2, Voi0, Voi1}.

\end{document}